\newcounter{thmcounter}
\numberwithin{thmcounter}{section}
\numberwithin{equation}{thmcounter}
\newtheorem{theorem}[thmcounter]{Theorem}
\newtheorem{proposition}[thmcounter]{Proposition}
\newtheorem{corollary}[thmcounter]{Corollary}
\theoremstyle{definition}
\newtheorem{definition}[thmcounter]{Definition}
\newtheorem{example}[thmcounter]{Example}
\newtheorem{remark}[thmcounter]{Remark}
\newtheorem{conjecture}[thmcounter]{Conjecture}
\newtheoremstyle{claim}{9pt}{3pt}{}{\parindent}{\bf}{.}{1em}{}
\theoremstyle{claim}
\newtheorem{claim}[equation]{Claim}
\newenvironment{namelist}[1]{%
\begin{list}{}
{
\settowidth{\labelwidth}{#1}%
\setlength{\labelsep}{0.3em}%
\setlength{\leftmargin}{\labelwidth}%
\addtolength{\leftmargin}{\labelsep}}}{%
\end{list}}
\newcommand{\nQ}{\mathbb{Q}}                     
\newcommand{\nP}{\mathbb{P}}                     
\newcommand{\nA}{\mathbb{A}}                     
\newcommand{\sO}{\mathscr{O}}                    
\newcommand{\sI}{\mathscr{I}}                    
\newcommand{\mf}[1]{\mathfrak{#1}}
\DeclareMathOperator{\Bl}{Bl}                    
\DeclareMathOperator{\bl}{Bl}                    
\DeclareMathOperator{\codim}{codim}              
\DeclareMathOperator{\exc}{exc}            
\DeclareMathOperator{\Jac}{Jac}                  
\DeclareMathOperator{\lct}{lct}                  
\DeclareMathOperator{\mld}{mld}                  
\DeclareMathOperator{\mj}{MJ}                    
\DeclareMathOperator{\ord}{ord}                  
\DeclareMathOperator{\Spec}{Spec}                
\DeclareMathOperator{\spec}{Spec}                
\newcounter{rkcounter}             
\newcommand{\MJ}[1]{{#1}_{\mj}}       
\begin{document}

\title[Mather-Jacobian singularities under generic linkage]{Mather-Jacobian singularities under generic linkage}
\author{Wenbo Niu}

\address{Department of Mathematical Sciences, University of Arkansas, Fayetteville, AR 72701, USA}
\email{wenboniu@uark.edu}

\subjclass[2010]{13C40, 14M06}

\keywords{Mather-Jacobian singularities, linkage, general link}

\begin{abstract} In this paper, we prove that Mather-Jacobian (MJ) singularities are preserved under the process of generic linkage. More precisely, let $X$ be a variety with MJ-canonical (resp. MJ-log canonical) singularities. Then a generic link of $X$ is also MJ-canonical (resp. MJ-log canonical). This further leads us to a result on minimal log discrepancies under generic linkage.
\end{abstract}

\maketitle

\section{Introduction}
\noindent  Two varieties $X$ and $Y$ in a nonsingular ambient variety $A$ are linked if their union $V=X\cup Y$ is a complete intersection in $A$. The study of linkage (also called liaison) has a long history in the classical algebraic geometry. Its modern study has attracted considerable attentions in the past fifty years from many authors (cf. \cite{PeskineSzpiro:LiaisonI}, \cite{LazarsfeldRao:Linkage}, \cite{Migliore:LiaisonTheory}). 

In many cases, the variety $X$ is fixed and the linked variety $Y$ is chosen to be general in the following way: choose the general equations from the defining equations of $X$ to create a complete intersection $V$ having $X$ as a component, then the residue part of $X$ in $V$ is the desired variety $Y$. In this approach, $Y$ is also called a general link of $X$.  Generic linkage theory, which follows a rigorous algebraic approach to construct general links, has been developed in a series of works mainly by Huneke and Ulrich (\cite{HunekeUlrich:DivClass}, \cite{HunekeUlrich:SturLinkage}, \cite{HU88}, \cite{HunekeUlrich:AlgLinkage}) in the past thirty years. One of the central problem in this theory is to explore and understand which properties of $X$ can be preserved by a generic link $Y$. 

Motivated by the work of Chardin-Ulrich \cite{CU:Reg}, who showed that a generic link of a local complete intersection with rational singularities has rational singularities, we are interested in studying the behavior of various singularities under generic linkage, especially for the singularities with geometric nature. In the recent work \cite{Niu:SingLink}, it has shown that log canonical singularities of pairs are always preserved under generic linkage but rational singularities are not. When the study moves to the singularities (such as canonical singularities) raised from birational geometry, the major obstruction is that the conditions of $\nQ$-Gorensteiness and normality are rarely preserved under linkage. Instead, the theory of Mather-Jacobian (MJ) singularities, rooted in the study of jet schemes, dose not require the normality condition and therefore has adequate flexibility for applications, especially for linkage. This new notion of singularities, with different names, was  introduced independently in \cite{Ishii:MatherDis}, \cite{Roi:JDiscrepancy}, \cite{Ein:MultIdeaMatherDis} and \cite{Ein:SingMJDiscrapency}.

The main purpose of this paper is to investigate MJ-singularities under the frame of linkage theory. We show that MJ-singularities are indeed preserved under the process of generic linkage.

\begin{theorem}\label{mainthm} Let $X$ be a variety with MJ-canonical (resp. MJ-log canonial) singularities. Then a generic link $Y$ of $X$ is also MJ-canonical (resp. MJ-log canonical).
\end{theorem}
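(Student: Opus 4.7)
The plan is to follow the overall strategy of \cite{Niu:SingLink} for log canonical singularities under generic linkage, but now work with Nash-blowup data adapted to the Mather-Jacobian framework. Embed $X \subset A$ with defining ideal $\sI_X = (f_1, \ldots, f_r)$, form generic combinations $g_i = \sum_j t_{ij} f_j$ for $i = 1, \ldots, c = \codim_A X$ with coefficients in a polynomial ring adjoined to the ground field, let $V = V(g_1,\ldots,g_c)$, and let $Y$ be defined by the residual ideal $(g_1,\ldots,g_c) : \sI_X$. The goal is to compare MJ-discrepancies on $X$ and $Y$ via the common ambient complete intersection $V$.

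The first step is to build a proper birational morphism $\mu\colon W \to A$ with $W$ smooth such that $\mu^{-1}(X)$, $\mu^{-1}(Y)$, $\mu^{-1}(V)$, together with the pullbacks of $\Jac_X$, $\Jac_Y$, $\Jac_V$, all cut out a simple normal crossings configuration. Because $V$ is a complete intersection (hence Gorenstein) and the $g_i$ are generic, the Mather canonical of $V$ coincides with its usual canonical $K_V = (K_A + V)|_V$, so MJ-discrepancies on $V$ reduce to ordinary discrepancies; a direct Bertini-type calculation then shows that $V$ is MJ-log canonical (resp.\ MJ-canonical along $X$) whenever $X$ is.

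The second step is to convert MJ-data on $V$ into MJ-data on $Y$. Using the linkage formula $\omega_Y \cong \sHom_{\sO_V}(\sO_X, \omega_V)$ together with a comparison of $\Jac_V$, $\Jac_X$, $\Jac_Y$ along the components of $\mu^{-1}(V)$, one obtains on $W$ an identity of the form
\[
a_E^{\mj}(Y) \;=\; a_E^{\mj}(X) \;+\; (\text{non-negative correction from the link})
\]
for every prime divisor $E$ whose center meets $X \cap Y$. For divisors supported over $Y \setminus X$, genericity of the link ensures that $Y$ is smooth at the generic point of the center, so $a_E^{\mj}(Y) \ge 0$ automatically. Minimizing over $E$ then yields $\mld_{\mj}(Y) \ge \mld_{\mj}(X)$, which proves the theorem.

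The main obstacle is establishing the identity in Step two: the Jacobian ideal of $Y$ is not controlled by the link in any obvious way and can in principle pick up extra vanishing along $X \cap Y$ that would lower $a_E^{\mj}(Y)$. The crucial input is that in a \emph{generic} link the coefficients $t_{ij}$ are algebraically independent parameters, which forces $\Jac_Y$ to factor through $\Jac_V$ in a controlled manner and any residual vanishing to be absorbed into the Mather part of the discrepancy. This is exactly where the MJ-framework outperforms the classical setting: the Mather correction compensates for the failure of normality and $\nQ$-Gorensteinness of $Y$, which is precisely what obstructs preservation of rational singularities under generic linkage.
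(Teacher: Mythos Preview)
Your proposal contains a genuine gap at exactly the point you flag as ``the main obstacle'': the claimed identity
\[
a_E^{\mj}(Y) \;=\; a_E^{\mj}(X) \;+\; (\text{non-negative correction})
\]
is never established, and the sentence offered in its place (``genericity forces $\Jac_Y$ to factor through $\Jac_V$ in a controlled manner and any residual vanishing to be absorbed into the Mather part'') is not an argument. Controlling $\Jac_Y$ directly is hard precisely because $Y$ has no a priori normality or Gorenstein property, and there is no mechanism in your outline that actually produces the non-negativity of the correction term along divisors centered in $X\cap Y$. The linkage isomorphism $\omega_Y\cong\sHom_{\sO_V}(\sO_X,\omega_V)$ relates canonical modules, not Jacobian ideals or Nash blowups, so it does not by itself compare $a_E^{\mj}(Y)$ with $a_E^{\mj}(X)$.

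The paper avoids this difficulty entirely by never touching $\Jac_Y$ or MJ-discrepancies computed on $Y$. Instead it uses Inversion of Adjunction (Theorem~\ref{p:03}) to translate both hypothesis and conclusion into statements about pairs on the \emph{smooth} ambient $A$: one must show that $Y$ is the unique log canonical center of $(A,I_Y^c)$. The resolution is built from a factorizing resolution of $X$ in $A$ followed by the blowup of the strict transform $\overline{X}$; an explicit local computation (Claim~\ref{clm:01}) shows that the strict transform $\widetilde{Y}$ is smooth and meets the exceptional strata transversally. This pins down the log canonical centers of $(A,I_V^c)$ as $X$, $Y$, and the components of $Z=X\cap Y$. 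The decisive step for MJ-canonical is then not a Jacobian comparison but a \emph{normality} input: since $X$ is MJ-canonical it has rational singularities, hence by \cite{Niu:SingLink} the link $Y$ is normal, hence smooth at the generic points of $Z$, so no component of $Z$ can be a log canonical center of $(A,I_Y^c)$. That leaves $Y$ as the only log canonical center, and Inversion of Adjunction finishes. Your outline misses both ingredients---working on $A$ rather than on $Y$, and the rational-singularities $\Rightarrow$ normality-of-$Y$ step---and without them the argument does not close.
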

The theorem is proved in section 3 (Theorem \ref{thm:31} and Theorem \ref{thm:21}) for the algebraic setting (Definition \ref{def:01}). The geometric settings (Definition \ref{def:22} and \ref{def:21}) are discussed in Section 4. Here we would like to mention that for MJ-log canonical singularities, the theorem is an immediate consequence of the results of \cite{Niu:SingLink} by using the Inversion of Adjunction (\cite{Ishii:MatherDis}, \cite{Roi:JDiscrepancy}). The new part is the case of MJ-canonical singularities, to which the most part of the paper is devoted. As a quick application of the theorem, we recover the aforementioned result of Chardin-Ulrich. Indeed, a variety that is a local complete intersection with rational singularities is MJ-canonical and MJ-canonical singularities imply rational singularities. 

One of the crucial points to establish Theorem \ref{mainthm} is to analyze the intersection divisor $Z=X\cap Y$ through a suitable resolution of singularities and use the Inversion of Adjunction. It turns out that the irreducible components of $Z$ are the minimal log canonical centers of the pair $(A, cV)$.  Hence by the Subadjunction Formula (\cite{Fujino:Subadjunction}), for such a component $W$ of $Z$, there exists an effective $\nQ$-divisor $D_W$ such that $(W,D_W)$ is Kawamata log terminal. This shows that  $Z$ is very close to being MJ-canonical. Along this line, we raise a conjecture in Section 4.
The study of the singularities of $Z$ plays the central role in many applications of linkage theory by the induction method. A typical application can be found in \cite{CU:Reg} on bounding the Castelnuovo-Mumford regularity. 

All above results further lead us to the following theorem about minimal log discrepancies under generic linkage (Theorem \ref{p:42} and Corollary \ref{p:43}). It shows, roughly speaking, the singularities get improved under generic linkage. 
\begin{theorem}	Let $X$ be a codimension $c$ subvariety of a nonsingular variety $A$ and let $Y$ be a generic link of $X$ via $V$. Let $Z=X\cap Y$ be the intersection. Then 
	$$\mld(Z;A, cV)\leq \mld(Z;A,cX)\leq \mld(Z;A,cY).$$
\end{theorem}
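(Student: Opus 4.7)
The plan is to reduce both inequalities to a divisorwise comparison on a single log resolution. Let $\pi\colon A'\to A$ be a log resolution of the product ideal $\sI_V\cdot\sI_X\cdot\sI_Y$, and write $\pi^{-1}\sI_W=\sO_{A'}(-F_W)$ for $W\in\{V,X,Y\}$. For every prime divisor $E$ on $A'$ whose center on $A$ lies in $Z$, the log discrepancy of the pair $(A,cW)$ along $E$ is
$$a(E;A,cW)\;=\;1+\ord_E K_{A'/A}-c\cdot\ord_E F_W.$$
Thus the whole theorem follows once one establishes the divisorial inequalities $\ord_E F_V\geq\ord_E F_X\geq\ord_E F_Y$ for every such $E$, since taking the minimum of $a(E;A,cW)$ over $E$ centered in $Z$ then delivers the asserted chain for $\mld(Z;A,\cdot)$.

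The first half $\ord_E F_V\geq\ord_E F_X$ is formal: the set-theoretic equality $V=X\cup Y$ gives $\sI_V=\sI_X\cap\sI_Y\subseteq\sI_X$, hence $F_V\geq F_X$ on $A'$. This already yields $\mld(Z;A,cV)\leq\mld(Z;A,cX)$ and does not use the genericity of the link. For the second half $\ord_E F_X\geq\ord_E F_Y$, I would invoke the simultaneous resolution from the proof of the main theorem (Theorems \ref{thm:31} and \ref{thm:21}): the generic link $Y$ is constructed from $X$ via a generic scroll of linear combinations of generators of $\sI_X$, and a log resolution of $(A,\sI_X)$ can be refined to resolve $(A,\sI_Y)$ and $(A,\sI_V)$ compatibly. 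On that refined resolution, the genericity of the scroll forces $\ord_E F_Y\leq\ord_E F_X$ for every divisor $E$ centered in $Z$: the residual ideal $\sI_Y=(\sI_V:\sI_X)$ can only cancel the vanishing of $\sI_V$ against that of $\sI_X$ and cannot acquire new vanishing beyond it.

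The principal obstacle is exactly this divisorial bound $\ord_E F_Y\leq\ord_E F_X$ along $Z$. While the first inequality is a direct consequence of an ideal inclusion, the second demands careful bookkeeping of how the generic linkage process interacts with the resolution, and this is essentially the technical content of Theorem \ref{thm:31}. Once the divisorial comparison is secured, the minimum computation above delivers the second inequality $\mld(Z;A,cX)\leq\mld(Z;A,cY)$, and Corollary \ref{p:43} follows by localising the chain of inequalities at each irreducible component of $Z$.
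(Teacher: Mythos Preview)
Your first inequality is fine and matches the paper: $I_V\subseteq I_X$ gives $\ord_E F_V\geq \ord_E F_X$ for every $E$, hence $\mld(Z;A,cV)\leq\mld(Z;A,cX)$.

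The second inequality, however, rests on a claim you do not prove and which is not established anywhere in the paper: that $\ord_E I_Y\leq \ord_E I_X$ for every divisor $E$ centered in $Z$. Your justification (``the residual ideal $\sI_Y=(\sI_V:\sI_X)$ can only cancel the vanishing of $\sI_V$ against that of $\sI_X$'') is a heuristic, not an argument. From $I_Y=(I_V:I_X)$ one gets $\ord_E I_Y\geq \ord_E I_V-\ord_E I_X$, which goes the wrong way; there is no ideal-theoretic inclusion $I_X\subseteq I_Y$ to exploit. You also point to Theorem~\ref{thm:31} as the source of this divisorial bound, but that theorem is the three-line MJ-log canonical case proved by Inversion of Adjunction and contains no such comparison. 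Nor does the resolution built in Theorem~\ref{thm:21} resolve $I_Y$---it is a log resolution of $I_V\cdot I_X$ only---so you cannot simply read off $\ord_E I_Y$ from that construction.

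The paper's proof proceeds along an entirely different line, and it is worth seeing why. It does \emph{not} compare discrepancies divisor by divisor. Instead it observes that $\mld(Z;A,cX)$ is either $-\infty$, $0$, or $\geq 1$, and treats each case. The $-\infty$ case is trivial. If $\mld(Z;A,cX)\geq 0$ then $(A,cX)$ is log canonical near $Z$, and the argument of Theorem~\ref{thm:21} (restricted to that neighbourhood) shows $(A,cY)$ is log canonical there too, giving $\mld(Z;A,cY)\geq 0$. If $\mld(Z;A,cX)\geq 1$, then $(A,cX)$ has $X$ as its only log canonical center near $Z$; the full force of Theorem~\ref{thm:21} then shows the same holds for $(A,cY)$ with center $Y$, so both $X$ and $Y$ are MJ-canonical near $Z$ by Inversion of Adjunction. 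Since $Z$ has codimension one in each and both are generically nonsingular along $Z$, one computes $\mld(Z;A,cX)=\mld(Z;A,cY)=1$ directly. The point is that the inequality is obtained by pinning down the actual values of the two minimal log discrepancies, not by a termwise comparison of valuations.
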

 It is worth pointing out that in contrast to the behavior of log canonical thresholds proved in \cite{Niu:SingLink} that $$\lct(A,Y)\geq \lct(A,V)=\lct(A,X),$$
a similar equality $\mld(Z;A, cV)=\mld(Z;A,cX)$ does not hold in general, even when $X$ is nonsingular. Therefore, taking $V$ as a bridge, as we did for lct,  to compare $\mld(Z;A,cY)$ with $\mld(Z;A,cX)$ does not work here.  This issue is discussed in Section 4 and showed by Example \ref{ex:01}.

\vspace{0.3cm}
\noindent{\em Acknowledgement}. We are grateful to Lawrence Ein, Shihoko Ishii, and Bernd Ulrich for valuable discussions and the referee for the suggestions which improve the paper. The author also thanks Claudia Polini for the opportunity to visit the University of Notre Dame to finish this paper. This work was partially supported by AMS-Simons Travel Grants.

\section{Mather-Jacobian singularities}

\noindent Throughout this paper, we work over an algebraically closed field $k$ of characteristic zero. A {\em variety} is a separated reduced and irreducible scheme of finite type over $k$. In this section, we briefly go through the theory of Mather-Jacobian singularities. For further details, we refer the reader to the work \cite{Ishii:MatherDis}, \cite{Roi:JDiscrepancy}, \cite{Ein:MultIdeaMatherDis} and \cite{Ein:SingMJDiscrapency}.

\begin{definition} Let $X$ be a variety of dimension $n$ and let $f:X'\longrightarrow X$ be a log resolution of the Jacobian ideal $\Jac_X$ of $X$. Then the image of the canonical homomorphism
$$f^*(\wedge^n\Omega^1_X)\longrightarrow \wedge^n\Omega^1_{X'}$$
is an invertible sheaf of the form $\Jac_f\cdot\wedge^n\Omega^1_X$, where $\Jac_f$ is the relative Jacobian ideal of $f$. The ideal $\Jac_f$ is invertible and defines an effective divisor $\widehat{K}_{X'/X}$ which is called the {\em Mather discrepancy divisor}.
\end{definition}

\begin{remark}
	In the above definition, the fact that the relative Jacobian ideal $\Jac_f$ is invertible follows from \cite{Lipman:JacobianIdeal} (see also \cite[Remark 2.3]{Ein:MultIdeaMatherDis}). 
\end{remark}

\begin{definition} Let $X$ be a variety and $\mf{a}\subseteq \sO_X$ be an ideal and let $t\in \nQ_{\geq 0}$. For a prime divisor $E$ over $X$, we denote by $C_X(E)$ the center of $E$ on $X$.  Consider a log resolution $\varphi:X'\longrightarrow X$ of $\Jac_X\cdot \mf{a}$ such that $E$ appears in $X'$ and $\mf{a}\cdot\sO_{X'}=\sO_{X'}(-Z)$ and $\Jac_X\cdot\sO_{X'}=\sO_{X'}(-J_{X'/X})$ where $Z$ and $J_{X'/X}$ are effective divisors on $X'$. We define the {\em Mather-Jacobian-discrepancy} ({\em MJ-discrepancy} for short) of $E$ to be
$$a_{\text{MJ}}(E;X,\mf{a}^t)=\ord_E(\widehat{K}_{X'/X}-J_{X'/X}-tZ).$$
The number $a_{\text{MJ}}(E;X,\mf{a}^t)+1$ is called the {\em Mather-Jacobian-log discrepancy} ({\em MJ-log discrepancy} for short). It is independent on the choice of the log resolution $\varphi$.
When $X$ is nonsingular, the MJ-discrepancy $\MJ{a}(E;X,\mf{a}^t)$ is the same as the usual discrepancy $a(E;X,\mf{a}^t)$.

Let $W$ be a proper closed subset of $X$ and let $\eta$ be a point of $X$ such that its closure $\overline{\{\eta\}}$ is a proper closed subset of $X$. We define the {\em minimal MJ-log discrepancy} of $(X,\mf{a}^t)$ along $W$ as
$$\MJ{\mld}(W;X,\mf{a}^t)=\inf_{C_X(E)\subseteq W}\{\ \MJ{a}(E;X,\mf{a}^t)+1\ |\ E \mbox{ a prime divisor over $X$} \}$$
and the {\em minimal MJ-log discrepancy} of $(X,\mf{a}^t)$ at $\eta$ as
$$\MJ{\mld}(\eta;X,\mf{a}^t)=\inf_{C_X(E)=\overline{\{\eta\}}}\{\ \MJ{a}(E;X,\mf{a}^t)+1\ |\ E \mbox{ a prime divisor over $X$} \}.$$
We use the convention that if $\dim X=1$ and the above values are negative, then we set them as $-\infty$. Notice that if $X$ is nonsingular, then minimal MJ-log discrepancy is just the usual minimal log discrepancy and we use the notation $\mld$ for this case.
\end{definition}

Recall that a prime divisor $E$ over a variety $X$ is called {\em exceptional} if there exists a birational morphism $\varphi: Y\longrightarrow X$ such that $Y$ is normal, $E$ is a divisor on $Y$, and $\varphi$ is not isomorphic at the generic point of $E$. Having the definition of MJ-discrepancy as above, we now are able to define MJ-singularities.

\begin{definition}\label{def:51} Let $X$ be a variety. We say that $X$ is {\em MJ-canonical} (resp. {\em MJ-log canonical}) if for every exceptional prime divisor $E$ over $X$, the MJ-discrepancy $\MJ{a}(E;X,\sO_X)\geq 0$ (resp. $\geq -1$) holds.
\end{definition}

\begin{remark}\label{rmk:11} (1) For MJ-log canonical singularities, we have the following equivalent definition: $X$ is MJ-log canonical if and only if the MJ-discrepancy $\MJ{a}(E;X,\sO_X)\geq -1$ for every prime divisor $E$ over $X$ (\cite[Proposition 2.23]{Ein:SingMJDiscrapency}). Simply from the definition of minimal MJ-log discrepancy, we can define MJ-log canonical singularities locally: $X$ is MJ-log canonical if and only if for every closed point $x\in X$, $\MJ{\mld}(x;X,\sO_X)\geq 0$ (\cite[Proposition 2.21]{Ein:SingMJDiscrapency}).

(2) It has been proved in  \cite{Ein:MultIdeaMatherDis} and \cite{Roi:JDiscrepancy} that if a variety has MJ-canonical singularities then it is normal and has rational singularities.

(3) For varieties of dimension one and two, the MJ-log canonical and MJ-canonical singularities have been classified in the work \cite{Ein:SingMJDiscrapency}.
\end{remark}

We shall use the following version of the Inversion of Adjunction. It plays a critical role in transferring singularity information from a variety to its ambient space.  
\begin{theorem}[Inversion of Adjunction, {\cite{Ishii:MatherDis}\cite{Roi:JDiscrepancy}}]\label{p:03}Let $X$ be a codimension $c$ subvariety of a nonsingular variety $A$ defined by the ideal $I_X$.
\begin{itemize}
\item [(1)] Let $W\subset X$ be a proper closed subset of $X$. Then
$$\MJ{\mld}(W;X,\sO_X)=\mld(W;A,I^c_X).$$
\item [(2)] Let $\eta\in X$ be a point such that its closure $\overline{\{\eta\}}$ is a proper closed subset of $X$. Then
$$\MJ{\mld}(\eta;X,\sO_X)=\mld(\eta ;A,I^c_X).$$
\end{itemize}
\end{theorem}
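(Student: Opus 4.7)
The plan is to pass to a common resolution of $X$ and $A$, and to translate MJ-discrepancies of divisors over $X$ into ordinary discrepancies of the pair $(A,I_X^c)$ via a single numerical identity. First I would fix a projective birational morphism $\varphi:A'\to A$ with $A'$ nonsingular, chosen so as to be a log resolution of the ideal $I_X\cdot\Jac_X$, such that the strict transform $X'\subset A'$ of $X$ is nonsingular and the restriction $f=\varphi|_{X'}:X'\to X$ is a log resolution of $\Jac_X$. Write $I_X\cdot\sO_{A'}=\sO_{A'}(-\sum_i r_i E_i)$ and $K_{A'/A}=\sum_i k_i E_i$, so that the log discrepancy of the pair $(A,I_X^c)$ along a prime divisor $E_i$ is $k_i-c\,r_i+1$. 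On the $X$-side, the MJ-log discrepancy of a prime divisor $F$ on $X'$ is $\ord_F(\widehat{K}_{X'/X}-J_{X'/X})+1$, where $\Jac_X\cdot\sO_{X'}=\sO_{X'}(-J_{X'/X})$.

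The core step is to prove a numerical identity of the shape
$$\widehat{K}_{X'/X}-J_{X'/X}\;=\;\bigl(K_{A'/A}\bigr)\big|_{X'}\;-\;c\cdot\Bigl(\sum_i r_i E_i\Bigr)\Big|_{X'}\;+\;(\text{normal-bundle correction}),$$
which expresses the Mather--Jacobian correction purely in terms of data on the ambient smooth variety $A'$. The derivation goes through the conormal sequence $0\to I_X/I_X^2\to \Omega^1_A|_X\to \Omega^1_X\to 0$ together with the Fitting-ideal description of $\Jac_X$: taking top exterior powers identifies $\wedge^n\Omega^1_A|_X$ with $\wedge^n\Omega^1_X\otimes \det(I_X/I_X^2)^\vee$ up to torsion measured exactly by $\Jac_X$. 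Pulling back by $\varphi$, restricting to $X'$, and computing orders along a prime divisor yields the identity.

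With the identity in hand, the two inequalities defining the inversion of adjunction follow by a divisor-matching argument. For $\MJ{\mld}(W;X,\sO_X)\geq \mld(W;A,I_X^c)$, I would take any prime divisor $F$ over $X$ with center in $W$, realize it on a suitable $(A',X')$ after further blowups, and identify its MJ-log discrepancy with the log discrepancy of a companion divisor $E$ over $A$. For the reverse inequality, one observes that only divisors $E$ with $\ord_E(I_X)\geq 1$ can contribute small log discrepancies to $(A,I_X^c)$ along $W\subset X$; any such $E$ can, after additional blowing up, be arranged to meet $X'$ transversely in a prime divisor $F$ to which the identity applies. Part (2) for a point $\eta$ is the same argument restricted to the infimum over $C_X(E)=\overline{\{\eta\}}$.

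The main obstacle is establishing the numerical identity rigorously. It is subtle because $\Omega^1_X$ has torsion precisely along $\Sing X$, so turning the top-exterior-power version of the conormal sequence into a clean divisor-level equation on $X'$ requires careful bookkeeping of how $\Jac_X$ measures the failure of $\Omega^1_X$ to be locally free. An equivalent route is to pass through the Nash blowup of $X$, of which $X'$ is a further resolution, and to identify the Mather discrepancy intrinsically in terms of the tautological quotient there. The secondary subtlety --- arranging that an arbitrary prime divisor over $X$, or over $A$ with small log discrepancy, actually appears as a transverse intersection on a common log resolution $(A',X')$ --- is handled by iterated resolution of singularities and is mostly bookkeeping once the identity is established.
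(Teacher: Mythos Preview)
Your proposal and the paper's proof occupy entirely different levels. The paper does not prove part~(1) at all: it simply cites it as a special case of \cite[Theorem~3.10]{Ishii:MatherDis}. The only new content in the paper's argument is part~(2), and there the paper reduces to part~(1) by a short localization trick rather than by rerunning any discrepancy computation. Concretely, writing $W=\overline{\{\eta\}}$, the paper takes a log resolution of $I_X\cdot I_W$, removes from $A$ the centers of all exceptional divisors whose image does not contain $W$, and obtains an open set $U$ on which $\mld(\eta;A,I_X^c)=\mld(W\cap U;U,I_X^c|_U)$ and similarly for the MJ side; then part~(1) applied over $U$ gives~(2).

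What you have outlined is, in effect, a sketch of how one would prove the cited Inversion of Adjunction theorem itself --- the numerical identity relating $\widehat{K}_{X'/X}-J_{X'/X}$ to $(K_{A'/A}-c\cdot\text{div}(I_X))|_{X'}$, together with the divisor-matching for both inequalities, is exactly the substance of the arguments in \cite{Ishii:MatherDis} and \cite{Roi:JDiscrepancy}. Your outline is broadly in the right direction for that, though the ``normal-bundle correction'' you leave unspecified is where the real work lies, and the reverse-inequality step (producing an $F$ on $X'$ from an $E$ over $A$) is genuinely delicate and not just bookkeeping. But none of this is what the paper is doing: the paper treats (1) as a black box and spends its effort only on the passage from closed subsets to points, which it handles in two lines by shrinking $A$. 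If you are writing up this theorem in the spirit of the paper, you should cite (1) and then give the localization argument for~(2); if your goal is instead to reprove the cited result, you should say so and be prepared to make the conormal/Jacobian identity precise.
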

\begin{proof} (1) is a simple version of \cite[Theorem 3.10]{Ishii:MatherDis}. For (2), write $W=\overline{\{\eta\}}$. We can find a small open set $U$ of $\eta$ in $A$ such that $\mld(\eta; A,I^c_X)=\mld(W\cap U ; U, I^c_X|_U)$ and $\MJ{\mld}(\eta ; X,\sO_X)=\MJ{\mld}(W\cap U; X\cap U,\sO_X|_{X\cap U})$. Such open set $U$ can be constructed as follows. Take a log resolution $f:A'\longrightarrow A$ of $I_X\cdot I_W$. Then remove the center $f(E)$ from $A$ for any prime divisor $E
	\subset A'$ such that $f(E)$ does not contain $W$ (hence also remove $f(E)$ if it is a proper subset of $W$). On this open set $U$  we apply the result (1) to get the desired result (2).
\end{proof}

Using the Inversion of Adjunction, we see that MJ-log canonical singularities are essentially the same as log canonical singularities of pairs once we embed the variety in a nonsingular ambient space. We state this observation in the following proposition, which is known to the experts.
\begin{proposition}\label{p:51} Let $X$ be a codimension $c$ subvariety of a nonsingular variety $A$ defined by the ideal $I_X$. Then $X$ is MJ-log canonical if and only if the pair $(A,I^c_X)$ is log canonical.
\end{proposition}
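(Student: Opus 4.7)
The plan is to reduce the proposition to a pointwise comparison of minimal log discrepancies on $X$ and on $A$, and then to invoke the Inversion of Adjunction (Theorem~\ref{p:03}) directly.

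First I would reformulate both sides pointwise. By Remark~\ref{rmk:11}(1), the variety $X$ is MJ-log canonical if and only if $\MJ{\mld}(x;X,\sO_X)\geq 0$ at every closed point $x\in X$. Analogously, on the nonsingular ambient variety $A$ the pair $(A,I_X^c)$ is log canonical if and only if $\mld(x;A,I_X^c)\geq 0$ at every closed point $x\in A$, since log discrepancies for pairs on a nonsingular variety are controlled by the minimal log discrepancies at closed points. In this way the proposition reduces to checking the equivalence of the two inequalities at each closed point $x$.

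Next I would split the closed points $x\in A$ into two cases. If $x\in A\setminus X$, then $I_X\sO_{A,x}=\sO_{A,x}$, so the pair $(A,I_X^c)$ coincides locally with $(A,\sO_A)$; hence $\mld(x;A,I_X^c)=\dim A\geq 0$ automatically, and such points impose no condition on either side. If instead $x\in X$ and $\dim X\geq 1$, then $\overline{\{x\}}=\{x\}$ is a proper closed subset of $X$, so Theorem~\ref{p:03}(2) applies and yields the identity
$$\MJ{\mld}(x;X,\sO_X)=\mld(x;A,I_X^c),$$
from which the equivalence of the two nonnegativity conditions is immediate. The degenerate case $\dim X=0$ is straightforward: $X$ is then a reduced closed point, trivially MJ-log canonical, and one checks directly that the associated pair $(A,\mf{m}_x^{\dim A})$ is log canonical on the nonsingular ambient $A$.

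I do not anticipate any genuine obstacle here: the content of the statement is essentially the Inversion of Adjunction applied with the trivial ideal on $X$. The only mildly delicate step is the routine reduction of log canonicality of a pair on a nonsingular variety to the pointwise minimal log discrepancy condition, together with the separate treatment of points off $X$; once Theorem~\ref{p:03}(2) is in hand, the remainder of the argument is a formal translation.
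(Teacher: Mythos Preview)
Your proposal is correct and follows essentially the same approach as the paper: reduce MJ-log canonicity of $X$ and log canonicity of $(A,I_X^c)$ to pointwise nonnegativity of (MJ-)minimal log discrepancies via Remark~\ref{rmk:11}(1), and identify these at each closed point of $X$ by the Inversion of Adjunction (Theorem~\ref{p:03}). Your version is in fact slightly more careful, explicitly disposing of points $x\in A\setminus X$ and the degenerate case $\dim X=0$, which the paper's proof leaves implicit.
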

\begin{proof} $X$ is MJ-log canonical if and only if for any closed point $x\in X$, $\MJ{\mld}(x;X,\sO_X)\geq 0$. By Theorem \ref{p:03}, we deduce that $\mld(x;A,I^c_X)=\MJ{\mld}(x;X,\sO_X)\geq0$, which implies that the pair $(A,I^c_X)$ is log canonical.
\end{proof}

The following proposition was implicitly proved in \cite[Proposition 3.22]{Ein:SingMJDiscrapency}. It turns out to be very useful in our study, so we include its proof here. Recall that a closed subset $W$ of a nonsingular variety $A$ is called a {\em log canonical center} for a pair $(A, \mf{a}^t)$ if there is a prime divisor $E$ over $A$ such that $\ord_E(K_{\_/A})-t\ord_E\mf{a}+1\leq 0$ and the center $C_A(E)=W$.
\begin{proposition}\label{p:02} Let $X$ be a codimension $c$ subvariety of a nonsingular variety $A$. Assume that $X$ is MJ-canonical. Then $X$ is the unique log canonical center of the pair $(A,I_X^c)$.
\end{proposition}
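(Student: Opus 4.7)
The plan is to combine the Inversion of Adjunction (Theorem \ref{p:03}) with the normality of $X$ --- a consequence of MJ-canonicity, see Remark \ref{rmk:11}(2) --- to force the minimal log discrepancy over any proper closed subset $W\subsetneq X$ to be strictly positive, and then to exhibit $X$ itself as a log canonical center of $(A,I_X^c)$.

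First, I would verify that $X$ is itself an LC center. The smooth locus $X^{sm}$ is a nonempty open subset of $X$, so there is an open $U\subseteq A$ in which $X\cap U$ is smooth in $U$. Blowing up $X\cap U$ inside $U$ produces an exceptional divisor whose valuation extends uniquely to a prime divisor $E$ over $A$ with $C_A(E)=X$, $\ord_E(I_X)=1$, and $\ord_E(K_{\_/A})=c-1$. The log discrepancy of $E$ with respect to $(A,I_X^c)$ is then
\[
\ord_E(K_{\_/A})-c\cdot\ord_E(I_X)+1=(c-1)-c+1=0,
\]
so $X$ is indeed an LC center.

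Next I would show that no other subvariety can be an LC center. If $W$ is such a center, realized by a prime divisor $E$ with $C_A(E)=W$, then $W\subseteq X$: otherwise the generic point of $W$ lies outside $X$, giving $\ord_E(I_X)=0$ and forcing the log discrepancy of $E$ to be at least $1$, a contradiction. It then suffices to rule out $W\subsetneq X$. For such $W$, Theorem \ref{p:03}(1) yields $\mld(W;A,I_X^c)=\MJ{\mld}(W;X,\sO_X)$. For any exceptional prime divisor $F$ over $X$ with $C_X(F)\subseteq W$, MJ-canonicity of $X$ gives $\MJ{a}(F;X,\sO_X)\geq 0$; for a non-exceptional $F$ --- essentially a prime divisor on $X$ --- normality forces $X$ to be smooth in codimension one, so a log resolution of $\Jac_X$ is an isomorphism at the generic point of the strict transform of $F$, whence $\MJ{a}(F;X,\sO_X)=0$. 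Therefore every contribution to the infimum has MJ-log discrepancy at least $1$, giving $\mld(W;A,I_X^c)\geq 1$ and contradicting the LC-center property.

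The most delicate point is controlling the non-exceptional divisor contributions to $\MJ{\mld}(W;X,\sO_X)$, but this is handled cleanly by the observation above that MJ-canonicity implies normality and hence smoothness in codimension one. Combining the two steps above yields that $X$ is the unique LC center of $(A,I_X^c)$.
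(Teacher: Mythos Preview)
Your proof is correct and follows the same strategy as the paper's: Inversion of Adjunction reduces the question to bounding the minimal MJ-log discrepancy on $X$, and the blowup along (the smooth locus of) $X$ exhibits $X$ itself as a log canonical center. The paper uses the pointwise version (Theorem~\ref{p:03}(2)) and simply cites \cite[Proposition~2.21(ii)]{Ein:SingMJDiscrapency} for the bound $\MJ{\mld}(\eta;X,\sO_X)\geq 1$, whereas you use the closed-subset version (Theorem~\ref{p:03}(1)) and supply that bound directly via the exceptional/non-exceptional dichotomy together with normality---a slightly more self-contained variant of the same argument.
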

\begin{proof} Let $\eta$ be a point of $A$ such that the closure $\overline{\{\eta\}}$ is a proper closed subset of $A$. If $\eta$ is not in $X$, then $\mld(\eta;A,I^c_X)=\mld(\eta;A,\sO_A)\geq 1$. If $\eta$ is in $X$ and $\overline{\{\eta\}}$ is a proper closed subset of $X$, then by the Inversion of Adjunction Theorem \ref{p:03}, $\mld(\eta;A,I^c_X)=\MJ{\mld}(\eta;X,\sO_X)\geq 1$ since $X$ is MJ-canonical (\cite[Proposition 2.21(ii)]{Ein:SingMJDiscrapency}). If $\eta$ is the generic point of $X$, we can compute directly that $\mld(\eta;A,I_X^c)=0$ by blowing up $A$ along $X$.
\end{proof}

The following corollary gives an interesting property that the ideal of a codimension one subvariety of a MJ-canonical variety can be represented as a multiplier ideal.  Recall that given a pair $(A, \mf{a})$, we denote by  $\sI(A,\mf{a}^t)$ the multiplier ideal of weight $t$ associated to the pair. If $\mf{a}$ defines a subscheme $Z$ of $A$, we also use $\sI(A, tZ)$ instead of $\sI(A,\mf{a}^t)$.
\begin{corollary}\label{p:23} Let $X$ be a codimension $c$ subvariety of a nonsingular variety $A$. Let $Z$ be a codimension one subvariety of $X$. Assume that $X$ is MJ-canonical. Then the pair $(A, cZ)$ is canonical and $I_Z=\sI(A,(c+1)Z)$.	
\end{corollary}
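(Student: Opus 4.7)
The plan is to prove the two assertions in turn, first establishing canonicity of the pair $(A, cZ)$ directly from MJ-canonicity of $X$, and then deducing the multiplier ideal identity $I_Z = \sI(A,(c+1)Z)$ by proving each inclusion separately.

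For canonicity of $(A, cZ)$, the key observation is that $Z \subseteq X$ implies $I_X \subseteq I_Z$, and hence $\ord_E(X) \geq \ord_E(Z)$ for every prime divisor $E$ over $A$; equivalently,
\[
a(E; A, cZ) \;\geq\; a(E; A, cX).
\]
For an exceptional prime divisor $E$ over $A$ I split into two cases. If $C_A(E) \not\subseteq Z$ then $\ord_E(Z) = 0$ and $a(E; A, cZ) = \ord_E(K_{A'/A}) \geq 0$. Otherwise $C_A(E) \subseteq Z \subsetneq X$ is a proper closed subset of $X$, and the same computation appearing in the proof of Proposition \ref{p:02}, via the Inversion of Adjunction (Theorem \ref{p:03}) and MJ-canonicity, gives $\mld(\eta; A, I_X^c) = \MJ{\mld}(\eta; X, \sO_X) \geq 1$ at the generic point $\eta$ of $C_A(E)$. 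Hence $a(E; A, cX) \geq 0$, and so $a(E; A, cZ) \geq 0$. This establishes that $(A, cZ)$ is canonical.

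For the multiplier ideal identity, fix a log resolution $f: A' \to A$ of $I_Z$ with $I_Z \cdot \sO_{A'} = \sO_{A'}(-F)$ and $F = \sum a_i E_i$. The inclusion $I_Z \subseteq \sI(A, (c+1)Z)$ follows formally from canonicity: for each $E_i$ with $a_i \geq 1$, the bound $\ord_{E_i}(K_{A'/A}) \geq c\,a_i$ gives $(c+1)a_i - \ord_{E_i}(K_{A'/A}) \leq a_i$, so any local section of $I_Z$ automatically satisfies the order condition defining $\sI(A,(c+1)Z)$. For the reverse inclusion I single out a prime divisor $E_0 \subset A'$ with center exactly $Z$ on $A$. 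Such an $E_0$ must exist because any log resolution of $I_Z$ dominates, near the generic point of $Z$, the blowup of the regular ideal $I_Z$ at that point; and since $Z$ is smooth at its generic point, one obtains $\ord_{E_0}(K_{A'/A}) = c$ and $\ord_{E_0}(F) = 1$. The coefficient of $E_0$ in $K_{A'/A} - (c+1)F$ is therefore $-1$, so any section $g \in \sI(A,(c+1)Z)$ must vanish on $E_0$; since $f(E_0)$ is dense in $Z$, we conclude $g \in I_Z$.

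I expect the main obstacle to be the first step: confirming that MJ-canonicity of $X$ actually produces discrepancies $\geq 0$ (and not merely $> -1$, which would only give log canonicity) at every divisor whose center is a proper closed subvariety of $X$. This ingredient is precisely what \cite[Proposition 2.21(ii)]{Ein:SingMJDiscrapency} supplies, and it is also the reason the argument needs genuine MJ-canonicity rather than just MJ-log canonicity. Once that bound is in place, both multiplier ideal inclusions reduce to elementary divisorial bookkeeping, with the reverse inclusion relying only on the fact that a variety is smooth at its generic point.
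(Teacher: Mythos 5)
Your proof is correct. Its first half is essentially the paper's argument with the black box opened: the paper invokes Proposition \ref{p:02} (that $X$ is the unique log canonical center of $(A,I_X^c)$) to conclude that $(A,cZ)$ has no log canonical centers, and then uses integrality of the discrepancies of the pair $(A,cZ)$ to pass from ``no log canonical centers'' to ``canonical''; you instead run the Inversion-of-Adjunction bound $\mld(\eta;A,I_X^c)=\MJ{\mld}(\eta;X,\sO_X)\geq 1$ divisor by divisor, combined with $\ord_E I_X\geq\ord_E I_Z$, which lands directly on $a(E;A,cZ)\geq 0$ --- same content, different packaging. Where you genuinely diverge is the identity $I_Z=\sI(A,(c+1)Z)$: the paper deduces it in one line from $\sI(A,cZ)=\sO_A$ by citing Ein's Lemma from \cite{Niu:SingLink}, whereas you reprove that lemma inline, getting $I_Z\subseteq\sI(A,(c+1)Z)$ from the canonicity bound $\ord_{E_i}K_{A'/A}\geq c\,a_i$, and the reverse inclusion from a divisor $E_0$ with center $Z$ satisfying $\ord_{E_0}K_{A'/A}=c$ and $\ord_{E_0}I_Z=1$, which exists because $Z$ is generically smooth of codimension $c+1$ in $A$ and any log resolution of $I_Z$ dominates its blowup. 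This buys a self-contained proof at the cost of a few extra lines; the only step you should state a bit more carefully is that your $E_0$ on $A'$ computes the same valuation as the exceptional divisor of $\Bl_{I_Z}A$ over the smooth locus of $Z$ (standard, since that blowup is regular at the relevant codimension-one point, so the resolution is an isomorphism there).
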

\begin{proof} Since $X$ is the only log canonical center of $(A,cX)$ and $Z$ is generically nonsingular, the pair $(A,cZ)$ has no log canonical centers. Hence the pair $(A,cZ)$ is canonical.  So the multiplier ideal $\sI(A,cZ)=\sO_A$. Now by  \cite[Ein's Lemma]{Niu:SingLink}, we see that $I_Z=\sI(A,(c+1)Z)$.
	
\end{proof}

\section{Generic linkage of affine varieties}

\noindent In this section, we study MJ-singularities under generic linkage. A generic link is constructed through a ring extension by adjoining variables. The theory based on this construction was developed by Huneke and Ulrich in the last thirty years and has reached fruitful results. For detailed information of generic linkage, we refer to \cite{HunekeUlrich:DivClass}, \cite{HunekeUlrich:SturLinkage}, \cite{HU88}, and \cite{HunekeUlrich:AlgLinkage}. The paper \cite{Niu:SingLink} also contains the most useful details and backgrounds related to this paper.

\begin{definition}\label{def:01} Let $X_k$ be a codimension $c$ subvariety of a nonsingular affine variety $A_k=\Spec R_k$. A generic link of $X_k$ is defined as follows. Fix a generating set $(f_1,\cdots,f_t)$ of the defining ideal $I_{X_k}$ of $X_k$. Let $(U_{ij}), 1\leq i\leq c, 1\leq j\leq t$, be a $c\times t$ matrix of variables. Set $R=R_k[U_{ij}]$ and $I_X=I_{X_k}R$ and define $A=\spec R$ and $X=\Spec R/I_X$. Notice that $I_X$ is generated by $(f_1,\cdots,f_t)$ in $R$. We define a complete intersection $V$ in $A$ by the ideal
$$I_V=(\alpha_1,\cdots,\alpha_c)=(U_{i,j})\cdot(f_1,\cdots,f_t)^T,$$
that is
$$\alpha_i=U_{i,1}f_1+U_{i,2}f_2+\cdots+U_{i,t}f_t,\quad\quad\quad\mbox{for } 1\leq i\leq c.$$
Then a {\em generic link} of $X_k$ via $V$ is a subscheme $Y$ of $A$ defined by the ideal $I_Y=(I_V:I_X)$.
\end{definition}

\begin{remark}
	(1) The generic link $Y$ is a subvariety (reduced and irreducible) of $A$ \cite[Proposition 2.6]{HunekeUlrich:DivClass}. Furthermore, $X$ and $Y$ are actually geometrically linked, i.e., the complete intersection $V$ has only $X$ and $Y$ as its irreducible components. 
	
	(2) Clearly, the natural morphism $X\longrightarrow X_k$ is smooth. Many singularities are preserved under smooth morphisms, such as rational singularities and log canonical singularities of pairs. MJ-singularities are also preserved by \cite[Corollary 2.8]{Niu:VanishingThem}. 
\end{remark}

We start with proving the MJ-log canonical case, which is a direct application of the Inversion of Adjunction plus the results of \cite{Niu:SingLink}.
\begin{theorem}\label{thm:31}Let $X_k$ be a subvariety of a nonsingular affine variety $A_k$. Let $Y$ be a generic link of $X_k$. Assume that $X_k$ has MJ-log canonical singularities. Then $Y$ also has MJ-log canonical singularities.
\end{theorem}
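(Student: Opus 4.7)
The plan is to dispatch this as a three-step translation: convert the MJ-log canonicity of $X_k$ and of $Y$ to ordinary log canonicity of the pairs $(A, I_X^c)$ and $(A, I_Y^c)$ by means of Proposition~\ref{p:51}, and bridge the two by the preservation of log canonical pairs under generic linkage established in \cite{Niu:SingLink}.

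First I would observe that the morphism $X\longrightarrow X_k$ obtained from the ring extension $R_k\subseteq R_k[U_{ij}]$ is smooth, and that MJ-log canonical singularities are preserved under smooth base change (as noted in the remark following Definition~\ref{def:01}, using \cite[Cor.~2.8]{Niu:VanishingThem}). Hence $X$ is itself MJ-log canonical, and applying Proposition~\ref{p:51} in the forward direction gives that the pair $(A, I_X^c)$ is log canonical; equivalently, $\lct(A,X) = c$, since for any proper subscheme $Z$ of codimension $c$ in a nonsingular variety one has $\lct(A,Z) \leq c$.

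Next, I would invoke the chain recalled in the introduction from \cite{Niu:SingLink},
$$\lct(A,X) \;=\; \lct(A,V) \;\leq\; \lct(A,Y),$$
which expresses the preservation of log canonical pairs under generic linkage through the complete intersection $V$. Combined with $\lct(A,Y)\leq \codim_A Y = c$, this forces $\lct(A,Y)=c$, i.e.~$(A, I_Y^c)$ is log canonical. A second application of Proposition~\ref{p:51}, this time in the reverse direction and applied to the codimension $c$ subvariety $Y\subset A$, then concludes that $Y$ is MJ-log canonical.

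There is essentially no obstacle in this direction of the main theorem: the whole proof is a two-sided use of Inversion of Adjunction wrapped around the lct result of \cite{Niu:SingLink}. The author has already flagged in the introduction that the MJ-log canonical case is immediate, and it is the MJ-canonical statement, addressed in the second half of the section, that will require a real argument via the analysis of the intersection divisor $Z=X\cap Y$ and the minimal log canonical centers of $(A,cV)$.
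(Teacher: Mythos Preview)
Your proposal is correct and follows essentially the same three-step approach as the paper: use Proposition~\ref{p:51} to pass from MJ-log canonicity of $X$ to log canonicity of $(A,I_X^c)$, invoke \cite{Niu:SingLink} to transfer this to $(A,I_Y^c)$, and apply Proposition~\ref{p:51} in reverse. The only cosmetic difference is that the paper cites \cite[Corollary~3.10]{Niu:SingLink} directly (which already absorbs the base change from $A_k$ to $A$), whereas you first pass to $X$ by smooth base change and then phrase the linkage step through the $\lct$ inequality; both routes encode the same content.
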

\begin{proof}
Applying Proposition \ref{p:51}, we see that the pair $(A_k, I^c_{X_k})$ is log canonical. Then apply \cite[Corollary 3.10]{Niu:SingLink}, we deduce that the pair $(A,I^c_Y)$ is log canonical. Finally, using Proposition \ref{p:51} again, we conclude that $Y$ is MJ-log canonical. 
\end{proof}

In the rest of this section, we focus ourselves on the case of MJ-canonical singularities. The essential point in our approach is to analyze the intersection divisor $Z=X\cap Y$. Since $X$ and $Y$ are linked by a complete intersection $V$, it is easy to see that $Z$ is purely codimension one in $X$ and $Y$.

\begin{theorem}\label{thm:21}Let $X_k$ be a subvariety of a nonsingular affine variety $A_k$. Let $Y$ be a generic link of $X_k$. Assume that $X_k$ has MJ-canonical singularities. Then
	\begin{itemize}
		\item [(1)] $Y$ has MJ-canonical singularities;
		\item [(2)] The intersection $Z=X\cap Y$ has disjoint irreducible components and each of them is a normal subvariety of codimension one in $X$. Furthermore, let $W$ be an irreducible component of $Z$. Then there exists an effective $\nQ$-divisor $D_W$ on $W$ such that $(W, D_W)$ is Kawamata log terminal (klt).
	\end{itemize}
\end{theorem}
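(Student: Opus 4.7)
The plan is to extract both parts from a careful discrepancy analysis of the pair $(A, cV)$ on a common log resolution, combined with Fujino's Subadjunction Formula and the Inversion of Adjunction (Theorem \ref{p:03}). Since the projection $X \to X_k$ is smooth, $X$ is MJ-canonical, and by Proposition \ref{p:02} the unique log canonical center of $(A, cX)$ is $X$ itself. I fix a common log resolution $\mu \colon A' \to A$ of $I_X \cdot I_Y \cdot I_V \cdot I_Z \cdot \Jac_X$, chosen so that the strict transforms $\widetilde X$ and $\widetilde Y$ are smooth and disjoint, and together with the exceptional divisors $\{E_i\}$ form a simple normal crossing divisor on $A'$.

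A preliminary observation simplifies part (1) considerably: differentiating $\alpha_i = \sum_j U_{ij} f_j$ and using that at least one $f_j$ does not vanish outside $X$ shows that the generic complete intersection $V$ is smooth on $A \setminus X$. Since $I_Y = I_V : I_X = I_V$ wherever $I_X = \sO_A$, the schemes $Y$ and $V$ coincide on $A \setminus X$, so $Y$ is smooth, and hence MJ-canonical, on $Y \setminus Z$. It therefore suffices to treat points $\eta \in Z$ for part (1).

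The main technical step is to read off the log canonical structure of $(A, cV)$. Using the description $I_V = (\alpha_1, \ldots, \alpha_c)$ with $\alpha_i$ generic combinations of generators of $I_X$, together with MJ-canonicity of $X$ and Corollary \ref{p:23}, a discrepancy computation on $A'$ shows that $(A, cV)$ is log canonical, that its log canonical centers are exactly $X$, $Y$, and the irreducible components of $Z$, and that each component of $Z$ is codimension one in both $X$ and $Y$ and is a minimal log canonical center. Part (2) then follows immediately: for each such component $W$, Fujino's Subadjunction Formula \cite{Fujino:Subadjunction} produces an effective $\nQ$-divisor $D_W$ on $W$ with $(W, D_W)$ klt, so $W$ is normal; and two distinct minimal lc centers must be disjoint (their intersection, if nonempty, would contain a strictly smaller lc center), giving the disjointness of the components of $Z$.

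The main obstacle is part (1) at a point $\eta \in Z$. Such a point lies on a unique component $W$, carrying the klt pair $(W, D_W)$ from part (2). By Inversion of Adjunction, MJ-canonicity of $Y$ at $\eta$ is equivalent to $\mld(\eta; A, I_Y^c) \geq 1$. Combining the klt structure on $W$ with MJ-canonicity of $X$, I plan to analyze the three pairs $(A, cX)$, $(A, cV)$, and $(A, cY)$ in parallel on the log resolution and show that no divisorial valuation $F$ over $A$ with $C_A(F) = \{\eta\}$ can realize log discrepancy $< 1$ for $(A, cY)$. The delicate point is to exploit that $W$ is klt—not merely log canonical—to bridge the gap between the bound $\mld \geq 0$ coming directly from log canonicity of $(A, cV)$ and the stronger bound $\mld \geq 1$ required for $(A, cY)$; this is where the full strength of Subadjunction (rather than just log canonicity of $W$) is essential, and it is the technical heart of the proof.
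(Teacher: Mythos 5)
There is a genuine gap, and it sits exactly where you locate ``the technical heart'': part (1) at points of the intersection $Z$. You propose to deduce $\mld(\eta;A,I_Y^c)\geq 1$ for $\eta\in W$ from the klt pair $(W,D_W)$ supplied by Subadjunction, but that divisor $D_W$ is produced by subadjunction for the pair $(A,cV)$ (or a small perturbation of it), not for $(A,cY)$, and there is no adjunction-type statement that converts ``$(W,D_W)$ is klt'' into a lower bound on log discrepancies of the \emph{different} pair $(A,cY)$ along $W$. The paper closes this step by an entirely different and more elementary mechanism: since $X$ is MJ-canonical it has rational singularities, whence by \cite{Niu:SingLink} the generic link $Y$ is \emph{normal}, hence nonsingular at the generic points of $Z$; a component $W$ of $Z$ has codimension one in $Y$, so $\mld(\eta_W;A,I_Y^c)=\MJ{\mld}(\eta_W;Y,\sO_Y)=1$ at its generic point and $W$ cannot be a log canonical center of $(A,I_Y^c)$. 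Combined with the containment of the lc centers of $(A,I_Y^c)$ among those of $(A,I_V^c)$ (because $I_V\subseteq I_Y$), this leaves $Y$ as the unique lc center of $(A,I_Y^c)$, and Inversion of Adjunction then gives MJ-canonicity of $Y$. Your outline never reaches this conclusion, and the route you sketch does not obviously lead there.

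A second, smaller issue: you assert that ``a discrepancy computation on $A'$ shows that the log canonical centers of $(A,cV)$ are exactly $X$, $Y$, and the irreducible components of $Z$,'' each a minimal center. This is not a routine computation on an arbitrary common log resolution. The paper has to build a very specific resolution -- a factorizing resolution of $X$, followed by the blow-up along $\overline{X}$, all obtained by base change from $A_k$ so that the genericity of the variables $U_{ij}$ forces the strict transform $\widetilde{Y}$ to be nonsingular and to meet every stratum of the exceptional locus transversally (Claim \ref{clm:01}) -- and then must still run a perturbation $(A,(c-\epsilon)V+\epsilon aZ)$ together with the connectedness theorem to show that $E_X\cap E_Y\to Z$ is dominant with connected fibers. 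That is what rules out lc centers strictly smaller than the components of $Z$ and simultaneously yields their disjointness (your alternative disjointness argument via intersections of minimal lc centers is fine, but only once minimality is actually established). Your treatment of part (2) via \cite{Fujino:Subadjunction} and your observation that $Y$ is smooth off $Z$ are both correct and consistent with the paper, but as it stands the proposal leaves both the identification of the lc centers and all of part (1) along $Z$ unproven.
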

\begin{proof} Let $\varphi_k: \overline{A}_k\longrightarrow A_k$ be a factorizing resolution of singularities (for the definition, see for example \cite[Definition 2.6, Remark 2.7]{Niu:SingLink}) of $X_k$ inside $A_k$, so that $$I_{X_k}\cdot\sO_{\overline{A}_k}=I_{\overline{X}_k}\cdot\sO_{\overline{A}_k}(-G_k),$$ where $\overline{X}_k$ is the strict transform of $X_k$, $G_k$ is an effective divisor supported on the exceptional locus of $\varphi_k$, and furthermore $\overline{X}_k$ and the exceptional locus of $\varphi_k$ are simple normal crossings. The morphism $\varphi_k$ can be assumed to be an isomorphism over the open set $A_k\backslash X_k$. 
Next, we blow up $\overline{A}_k$ along $\overline{X}_k$ to get
$$\mu_k:\widetilde{A}_k=\bl_{\overline{X}_k}\overline{A}_k\longrightarrow \overline{A}_k$$
such that $I_{\overline{X}_k}\cdot \sO_{\widetilde{A}_k}=\sO_{\widetilde{A}_k}(-T_k)$, where $T_k$ is an exceptional divisor of $\mu$ and it is a prime divisor since $\overline{X}_k$ is nonsingular. Denote the composition $\phi_k\circ\mu_k$ by $$\psi_k=(\phi_k\circ\mu_k): \widetilde{A}_k\stackrel{\mu_k}{\longrightarrow} \overline{A}_k\stackrel{\varphi_k}{\longrightarrow} A_k.$$
Notice that $\psi_k$ is a log resolution of the pair $(A_k,I_{X_k})$ and $I_{X_k}\cdot\widetilde{A}_k=\sO_{\widetilde{A}_k}(-T_k-\mu^*_kG_k)$.

Now we tensor $k[U_{ij}]$ to the construction above and make the base change for all the objects involved, including morphisms, varieties and divisors. We keep using the same letters without the subscript $k$ to represent the corresponding objects after base change. For instance, $\overline{A}=\overline{A}_k\otimes_k \Spec k[U_{ij}]$, $\overline{X}=\overline{X}_k\otimes_k\Spec k[U_{ij}]$, $G=G_k\otimes_k \Spec k[U_{ij}]$, $T=T_k\otimes_k \Spec k[U_{ij}]$ and etc. Therefore, we obtain a factorizing resolution of singularities of $X$ inside $A$ as
$$\varphi:\overline{A}\longrightarrow A,$$
such that $$I_X\cdot\sO_{\overline{A}}=I_{ \overline{X}}\cdot\sO_{ \overline{A}}(-G),$$ where the nonsingular variety $\overline{X}$ is the strict transform of $X$, $G$ is an effective divisor supported on the exceptional locus of $\varphi$, and $\overline{X}$ and exceptional locus of $\varphi$ are simple normal crossings. The composition
$$\psi:\widetilde{A}\stackrel{\mu}{\longrightarrow} \overline{A}\stackrel{\varphi}{\longrightarrow} A$$
is a log resolution of the pair $(A,I_X)$, where $\mu:\widetilde{A}\longrightarrow \overline{A}$ is the blowup of $\overline{A}$ along $\overline{X}$ with an exceptional divisor $T$, such that $$I_{X}\cdot\widetilde{A}=\sO_{\widetilde{A}}(-T-\mu^*G).$$

We make the following claim. Note that partial results of the claim has been proved in details in \cite[Claim 3.1.1, Claim 3.1.2]{Niu:SingLink}. However, in order to keep the consistence of notations, we still include them here.
\begin{claim}\label{clm:01} We have the following statements.

\begin{itemize}
\item [(1)] There is a decomposition
\begin{equation}\label{eq:03}
I_V\cdot\sO_{\overline{A}}=I_{\overline{V}}\cdot\sO_{\overline{A}}(-G),
\end{equation}
where $I_{\overline{V}}$ is an ideal sheaf on $\overline{A}$ and defines a complete intersection $\overline{V}$ of $\overline{A}$.
\item [(2)] Write $\overline{Y}$ to be the strict transform of $Y$ under the birational morphism $\varphi$. Then $\overline{Y}$ is linked to $\overline{X}$ via $\overline{V}$. More precisely, there is an affine open cover of $\overline{A}_k$ such that on each open set of the cover, $\overline{Y}$ is a generic link of $\overline{X}_k$ via $\overline{V}$ in the sense of Definition \ref{def:01}.
\item [(3)] There is a decomposition
\begin{equation}\label{eq:04}
I_{\overline{V}}\cdot\sO_{\widetilde{A}}=I_{\widetilde{Y}}\cdot \sO_{\widetilde{A}}(-T),
\end{equation}
where $I_{\widetilde{Y}}$ is an ideal on $\widetilde{A}$ and defines a nonsingular variety $\widetilde{Y}$ as the strict transform of $Y$ under the birational morphism $\psi$.
\item [(4)] The nonsingular variety $\widetilde{Y}$ meets the strata of the divisors supported in $\exc(\psi)$ as normal crossings. Precisely, for arbitrary $r(\geq 0)$ prime divisors $E_1,\cdots,E_r$ supported in  $\exc(\psi)$, the intersection $\widetilde{Y}\cap E_1\cap\cdots\cap E_r$ is either empty or a nonsingular subscheme of the expected dimension. In particular, $\widetilde{Y}\cap T$ is a nonsingular divisor in $\widetilde{Y}$.
\end{itemize}
\end{claim}
\textit{Proof of Claim \ref{clm:01}.} The question is local. Let $\overline{U}_k=\Spec \overline{R}_k$ be an affine open set of $\overline{A}_k$ such that the effective divisor $G_k$ is defined by an equation $g\in \overline{R}_k$. By the construction that $\overline{A}_k$ is a factoring resolution of $X_k$ in $A_k$, we have a decomposition $I_{X_k}\cdot\overline{R}_k=I_{\overline{X}_k}\cdot (g)$ on $\overline{U}_k$. Notice that $I_{X_k}\cdot \overline{R}_k=(f_1,\cdots,f_t)\cdot \overline{R}_k$ so we can write $f_i=\overline{f}_ig$ for some $\overline{f}_i\in \overline{R}_k$ for $i=1,\cdots,t$ and therefore $I_{\overline{X}_k}=(\overline{f}_1,\cdots,\overline{f}_t)$. By base change, we set $\overline{R}=\overline{R}_k[U_{i,j}]$ so that  $\overline{U}=\Spec \overline{R}$ is an affine open set of $\overline{A}$. Notice that on $\overline{U}$ the ideal $I_{\overline{X}}=I_{\overline{X}_k}\cdot \overline{R}$ and the effective divisor $G$ is still defined by the equation $g$. Recall that the ideal $I_V=(\alpha_1,\cdots,\alpha_c)$, where $\alpha_i=U_{i,1}f_1+U_{i,2}f_2+\cdots+U_{i,t}f_t$. Thus if we write $$\overline{\alpha}_i=U_{i,1}\overline{f}_1+U_{i,2}\overline{f}_2+\cdots+U_{i,t}\overline{f}_t, \mbox{ for }i=1,\cdots, c$$ and set $I_{\overline{V}}=(\overline{\alpha}_1,\cdots,\overline{\alpha}_c)$, then $I_{\overline{V}}$ is a complete intersection on $\overline{U}$. It is then clear that we have the desired decomposition $I_{V}\cdot \overline{R}=I_{\overline{V}}\cdot (g)$ on $\overline{U}$ as in the statement (1). Notice that $\varphi$ is an isomorphism outside $X$ and then (2) is a directly consequence of the local construction above.

For the statement (3), we continue to work locally. We blow up $\overline{U}_k$ along $I_{{\overline{X}}_k}$ and then use base change to obtain the blowup $\overline{U}$ along the ideal $I_{\overline{X}}$. Take a canonical affine cover of the blowup $\Bl_{\overline{X}_k}\overline{U}_k$ and then we proceed on each open set of this cover. Without loss of generality, we set
\begin{equation}\label{eq:01}
S=\overline{R}_k[\overline{f}_2/\overline{f}_1,\cdots, \overline{f}_t/\overline{f}_1],
\end{equation}
and then $\widetilde{U}_k=\Spec S$ is an open set of the cover such that the exceptional divisor $T$ is given by the element $\overline{f}_1$ on $\widetilde{U}_k$. Accordingly, $\widetilde{U}=\Spec S[U_{i,j}]$ is an open set of $\Bl_{\overline{X}}\overline{U}$. Write
\begin{equation}\label{eq:02}
\widetilde{\alpha}_i=U_{i,1}+U_{i,2}\overline{f}_2/\overline{f}_1+\cdots+U_{i,t}\overline{f}_t/\overline{f}_1, \mbox{ for }i=1,\cdots,c,
\end{equation}and set $I_{\widetilde{Y}}=(\widetilde{\alpha}_1,\cdots,\widetilde{\alpha}_c)$. Then on the open set $\widetilde{U}$ we have $I_{\overline{V}}\cdot \sO_{\widetilde{U}}=I_{\widetilde{Y}}\cdot (f_1)$ and $I_{\widetilde{Y}}$ defines an irreducible nonsingular variety $\widetilde{Y}$ on $\widetilde{U}$ since
$$\widetilde{Y}=\Spec S[U_{i,j}]/(\widetilde{\alpha}_1,\cdots,\widetilde{\alpha}_c)$$
and each $\widetilde{\alpha}_i$ is essentially defined by a variable. Since $\psi$ is an isomorphism outside $X$, $\widetilde{Y}$ is clearly the strict transform of $Y$ by our construction.

For the statement (4), by the construction, $\exc(\varphi)$ is the base change of $\exc(\varphi_k)$. Hence each $E_i$ is the base change of an exceptional divisor $E_{i,k}$ on $\widetilde{A}_k$. Consider an irreducible component $B$ of the intersection $E_1\cap\cdots\cap E_r$. It is clear that $B$ can be obtained by base change of a corresponding irreducible component $B_k$ of the intersection  $E_{1,k}\cap\cdots\cap E_{r,k}$. Hence, we can assume that $B$ is defined by
$$I_{B}=I_{B_k}[U_{i,j}].$$
Then the structure sheaf of $\widetilde{Y}\cap B$ is
$$S[U_{i,j}]/(I_{\widetilde{Y}}+I_B)=\sO_{B_k}[U_{i,j}]/(\widetilde{\alpha}'_1,\cdots,\widetilde{\alpha}'_c),$$
where $\widetilde{\alpha}'_i=U_{i,1}+U_{i,2}\widetilde{s}_2+\cdots+U_{i,t}\widetilde{s}_t$ and $\widetilde{s}_j$ is the image of $\overline{f}_j/\overline{f}_1$ in the ring $\sO_{B_k}$. Because $B_k$ is a regular subscheme of expected dimension and $\widetilde{\alpha}'_j$'s are all variables, this local computation shows that $\widetilde{Y}\cap B$ is a nonsingular subscheme of the expected dimension. This finishes the proof of Claim \ref{clm:01}.\\

We blow up $\tilde{A}$ along $\tilde{Y}$ to get $\nu: A'=\bl_{\tilde{Y}}\tilde{A}\longrightarrow\tilde{A}$ with an exceptional divisor $E_Y$. The composition $\rho=\nu\circ\psi:A'\longrightarrow A$ is a log resolution of $I_V\cdot I_X$, such that 
$$I_V\cdot\sO_{A'}=\sO_{A'}(-E_Y-E_X-P), \text{ and } I_X\cdot\sO_{A'}=\sO_{A'}(-E_X-P),$$
where $E_X=\nu^*(T)$ and $P=(\nu\circ\mu)^*(G)$ and $E_Y\cup E_X\cup P\cup \exc(\rho)$ has a simple normal crossing support. Since $X_k$ is MJ-canonical, the variety $X$ is also MJ-canonical \cite[Corollary 2.8]{Niu:VanishingThem}. By Proposition \ref{p:02}, $X$ is the unique log canonical center for the pair $(A,I^c_X)$. Furthermore, by the Inversion of Adjunction, the pair $(A,I_X^c)$ is log canonical and therefore $(A,I_V^c)$ is log canonical (\cite{Niu:SingLink}). A direct computation shows that the discrepancy $$a(E_Y; A, I_V^c)=a(E_X;A,I_V^c)=-1.$$  
For any other exceptional divisor $E$ on $A'$, the discrepancy $a(E; A, I_V^c)\geq 0$. Thus if $F$ is a prime divisor over $A$ with $a(F;A,I_V^c)=-1$, then we have either $F\in \{E_X,E_Y\}$ or the center of $F$ on $A'$ is an irreducible component of $E_X\cap E_Y$.

Denote by $I_Z=I_X+I_Y$ the ideal of $Z=X\cap Y$. Since both $Y$ and $X$ are log canonical centers for $(A,I^c_V)$, every irreducible component of $Z$ is also a log canonical center for $(A,I^c_V)$. We take a log resolution of $I_V\cdot I_X\cdot I_Z$ as $f:A''\longrightarrow A$ satisfying the following conditions.  
\begin{itemize}
	\item [(i)] It factors through $\rho$, i.e,
	$$\xymatrix{
		A'' \ar[rr]\ar[dr]_{f} &   & A' \ar[dl]^{\rho} \\
		& A &
	}$$
	\item [(ii)] For each irreducible component $W$ of $Z$, there exists a prime divisor $F\subset A''$ such that $a(F;A,I^c_V)=-1$ and $C_A(F)=W$.
\end{itemize}
We choose a number $a\in\nQ$ satisfying the condition that if $F\subset A''$ is a prime divisor such that $a(F;A,I^c_V)=-1$ and $C_A(F)\subseteq Z$, then we have the inequality $$\ord_FI_V<a\ord_FI_Z.$$
After fixing such number $a$, we further choose a rational number $0<\epsilon \ll 1$ such that for all prime divisor $F$ on $A''$ with $a(F;A,I^c_V)>-1$ and $C_A(F)\subseteq Z$, we have
$$a(F;A,I^c_V)+\epsilon (\ord_FI_V-a\ord_FI_Z)>-1.$$
Notice that by the construction, for any prime divisor $F\subset A''$, the discrepancy $$a(F; A,(c-\epsilon)V+ {\epsilon}aZ)=a(F;A,I^c_V)+\epsilon (\ord_FI_V-a\ord_FI_Z).$$ Now we observe that if $F\subset A''$ is a prime divisor, then  $a(F; A,(c-\epsilon)V+ {\epsilon}aZ)\leq -1$ if and only if $a(F;A,I^c_V)\leq -1$ and $C_A(F)\subseteq Z$. Furthermore, its center $C_{A'}(F)$ must be an irreducible component of $E_X\cap E_Y$. Applying the connectedness theorem to the pair $(X, (c-\epsilon)V+ {\epsilon}aZ)$ yields that the induced map $$\rho:E_X\cap E_Y\longrightarrow Z$$ is dominant and has connected fibers. But $E_X\cap E_Y$ is nonsingular so that its irreducible components are all disjoint. This implies that each component of $Z$ must be dominated by only one component of $E_X\cap E_Y$ and the components of $Z$ are disjoint. Therefore, $X$, $Y$ and the irreducible components of $Z$ are the only log canonical centers of the pair $(A, I^c_V)$. It follows that $Y$ and the irreducible components of $Z$ are the only possible log canonical centers of the pair $(A, I^c_Y)$.

Now since $X$ is MJ-canonical, it has rational singularities. By \cite{Niu:SingLink}, $Y$ is normal and therefore is nonsingular at the generic points of $Z$. Thus any irreducible component of $Z$ cannot be a log canonical center of the pair $(A,I^c_Y)$. Hence $Y$ is the only log canonical center of $(A,I^c_Y)$. Finally, by the Inversion of Adjunction, we deduce that $Y$ is MJ-canonical. 

For the statement (2), we take two general effective divisor $D_1$ and $D_2$ such that $I_{D_1}\subset I^2_V$ and $I_{D_2}\subset I^2_Z$. Set $D'_1=\frac{1}{2}D_1$ and $D'_2=\frac{1}{2}D_2$. Since each irreducible component of $Z$ is a minimal log canonical center of $(A, (c-\epsilon)V+ {\epsilon}aZ)$, by the general choice of $D'_1$ and $D'_2$, we see that each irreducible component of $Z$ is also a minimal log canonical center of $(A, (c-\epsilon)D'_1+ {\epsilon}aD'_2)$. The result then follows from Local Subadjunction Formula in \cite[Theorem 7.2]{Fujino:Subadjunction}.
\end{proof}

\begin{remark} In the proof above, we show that every irreducible component of the intersection $Z$ is a minimal log canonical center of $(A, cV)$, provided that $X$ is MJ-canonical. Unfortunately, our proof cannot show $Z$ itself is irreducible, which seems very natural. To get a better understanding of this issue, we quote the following result of Johnson-Ulrich about the intersection divisor $Z$.
\end{remark}

\begin{proposition}[\cite{Johnson:SerreGeoLink}]\label{p:06} Let $X_k$ be a subvariety of a nonsingular affine variety $A_k$. Let $Y$ be a generic link of $X_k$ and let $Z=X\cap Y$ be the intersection defined by $I_X+I_Y$. The following are equivalent.
\begin{itemize}
\item [(1)] $Z$ is integral.
\item [(2)] $Z$ is reduced.
\item [(3)] $X$ is a complete intersection locally in codimension one.
\end{itemize}	
\end{proposition}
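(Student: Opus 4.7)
The plan is to prove the three conditions equivalent cyclically by localizing at codimension-one points of $X$ and exploiting the explicit algebraic form of generic linkage from Definition \ref{def:01}. The implication $(1) \Rightarrow (2)$ is immediate. The key reduction throughout is that a codimension-one component $W$ of $Z$ corresponds to a codimension-$(c+1)$ point $\mathfrak{p}$ of the regular ambient $A$, and the question is whether $I_X + I_Y$ is a prime (respectively radical) ideal after localizing at $\mathfrak{p}$.

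For $(3) \Rightarrow (1)$, I would localize at the generic point $\eta$ of a component of $Z$. By hypothesis $(3)$, $I_X$ is generated by $c$ elements $f_1,\ldots,f_c$ at $\eta$. Since $(U_{ij})$ is a generic $c\times t$ matrix, after a suitable change of basis among the chosen generators of $I_{X_k}$, the leading $c\times c$ block of $(U_{ij})$ has determinant a unit in the localization at $\eta$; hence $I_V = I_X$ at $\eta$, and $Y$ becomes a complete intersection of the expected codimension at the corresponding point. The intersection $Z$ is then the proper intersection of two complete intersections in a regular scheme; standard commutative algebra gives that $Z$ is Cohen--Macaulay and generically reduced at its codimension-zero points, hence reduced by Serre's criterion $R_0 + S_1$. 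For irreducibility, observe that $V = X \cup Y$ is connected (being a complete intersection in an affine space), and its two irreducible components $X$ and $Y$ have equal dimension, forcing $Z$ to be connected; a specialization/flatness argument over the parameter space $\Spec k[U_{ij}]$ then upgrades connectedness to irreducibility, using the fact that the generic-linkage family is flat with irreducible generic fiber.

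For $(2) \Rightarrow (3)$, I argue the contrapositive. Suppose $X$ fails to be a complete intersection at a codimension-one point $\mathfrak{p}$, so the minimal number of generators of $I_X$ at $\mathfrak{p}$ strictly exceeds $c$. Take a minimal free presentation of $I_X$ locally at $\mathfrak{p}$ with syzygy matrix $M$. Substituting $\alpha_i = \sum_j U_{ij} f_j$ produces nontrivial relations on the generators of $I_V$ that involve the $U_{ij}$ in an essential way, and the standard description of the residual ideal $I_V : I_X$ in terms of maximal minors of the combined matrix formed from $(U_{ij})$ and $M$, central to Huneke--Ulrich's generic linkage theory, lets one write out an explicit element $h \in R_{\mathfrak{p}}$ which is nonzero modulo $I_X + I_Y$ but whose square lies in $I_X + I_Y$. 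This produces a nonzero nilpotent in $\sO_Z$ at $\mathfrak{p}$, contradicting $(2)$.

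The principal obstacle is the irreducibility step in $(3) \Rightarrow (1)$: reducedness falls out fairly cleanly from the local Cohen--Macaulay analysis of two properly meeting complete intersections, but ruling out a splitting of the reduced scheme $Z$ into several disjoint components requires a genuinely global argument that exploits the genericity of the $U_{ij}$ to prevent any accidental decomposition. The secondary difficulty is making the syzygy computation in $(2) \Rightarrow (3)$ sufficiently explicit to exhibit a nilpotent; this is likely easiest when $I_X$ is an almost complete intersection (needing exactly $c+1$ generators), after which the general case follows by localizing further to reduce to this setting.
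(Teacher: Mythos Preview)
The paper does not give its own proof of this proposition; it is quoted from \cite{Johnson:SerreGeoLink} and used as a black box to deduce Corollary~\ref{p:41}. So there is nothing in the paper to compare your argument against directly.

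That said, your proposal for $(3)\Rightarrow(1)$ contains a genuine error. You localize at the generic point $\eta$ of a component of $Z$ and assert that, after a change of basis among the generators of $I_{X_k}$, the leading $c\times c$ block of $(U_{ij})$ has determinant a unit at $\eta$, whence $I_V=I_X$ there. But the $U_{ij}$ are indeterminates adjoined to $R_k$ (Definition~\ref{def:01}), so any $c\times c$ minor of $(U_{ij})$ is a nonconstant polynomial in the $U_{ij}$ and is never a unit in a localization of $R=R_k[U_{ij}]$ at a prime containing it. Worse, the conclusion $I_V=I_X$ at $\eta$ is self-defeating: it forces $I_Y=(I_V:I_X)$ to be the unit ideal at $\eta$, so $\eta\notin Y$, contradicting $\eta\in Z=X\cap Y$. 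The correct local picture when $I_X=(g_1,\dots,g_c)$ is a complete intersection at $\eta$ is that $(\alpha_1,\dots,\alpha_c)^T=M(g_1,\dots,g_c)^T$ with $M=UB$ for a $t\times c$ transition matrix $B$, and then $I_Y=I_V+(\det M)$, so locally $I_Z=(g_1,\dots,g_c,\det M)$. Reducedness and irreducibility of $Z$ come down to analyzing $\det M$ modulo $I_X$ (via Cauchy--Binet this is a linear form in the $c\times c$ minors of $U$ with coefficients the $c\times c$ minors of $B$), which is where the real work in \cite{Johnson:SerreGeoLink} lies; your argument bypasses this entirely.

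Your $(2)\Rightarrow(3)$ sketch points in the right direction but does not get past the assertion that an explicit nilpotent $h$ can be written down; producing that element from the presentation matrix is precisely the computation that needs to be done, and reducing to the almost-complete-intersection case does not by itself supply it.
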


Now from this proposition, the intersection divisor $Z$ in Theorem \ref{thm:21} is irreducible. So we conclude the following corollary.
\begin{corollary}\label{p:41}Let $X_k$ be a subvariety of a nonsingular affine variety $A_k$ such that $X_k$ has MJ-canonical singularities. Let $Y$ be a generic link of $X_k$. Then the intersection $Z=X\cap Y$ defined by the ideal $I_X+I_Y$ is an irreducible codimension one subvariety of $X$ and there exists an effective $\nQ$-divisor $D_Z$ on $Z$ such that the pair $(Z,D_Z)$ is klt.
\end{corollary}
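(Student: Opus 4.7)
The plan is to combine the structural information already packaged in Theorem \ref{thm:21}(2) with the Johnson--Ulrich criterion of Proposition \ref{p:06}. Theorem \ref{thm:21}(2) already asserts that $Z$ is a disjoint union of normal codimension one subvarieties of $X$, each of which carries an effective $\nQ$-divisor $D_W$ making $(W,D_W)$ klt. So the only new work is to promote ``disjoint union of irreducible components'' to ``irreducible''; once that is done, the klt assertion transfers verbatim by setting $D_Z := D_W$ for the unique component $W=Z$.

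To apply Proposition \ref{p:06}, I would verify condition (3): $X$ is a complete intersection locally in codimension one. The chain of implications I would use is the following. Since $X_k$ is MJ-canonical and the natural morphism $X\to X_k$ is smooth, the variety $X$ inherits MJ-canonical singularities by \cite[Corollary 2.8]{Niu:VanishingThem}. Remark \ref{rmk:11}(2) then delivers normality of $X$. Serre's $R_1$ condition implies that the local ring $\sO_{X,p}$ is regular at every codimension one point $p$ of $X$, and a regular local ring is in particular a complete intersection. Invoking the equivalence (1)$\Leftrightarrow$(3) of Proposition \ref{p:06} yields that $Z$ is integral, hence irreducible.

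The main obstacle is essentially non-existent---this corollary is a bookkeeping consequence once Theorem \ref{thm:21} and Proposition \ref{p:06} are at our disposal. The one point worth double-checking while writing the proof is that the phrase ``locally in codimension one'' in Proposition \ref{p:06} is meant relative to $X$ (not to $A$), so that the normality/$R_1$ reduction is indeed the right tool; in codimension $c=1$ the claim is trivially true since $X$ is then a hypersurface.
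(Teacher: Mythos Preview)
Your proposal is correct and matches the paper's approach: the paper simply asserts that Proposition~\ref{p:06} forces $Z$ to be irreducible and then reads off the conclusion from Theorem~\ref{thm:21}(2), and you supply exactly the verification of condition~(3) (via MJ-canonical $\Rightarrow$ normal $\Rightarrow$ $R_1$ $\Rightarrow$ smooth, hence locally complete intersection in $A$, at codimension one points) that the paper leaves implicit. One cosmetic remark: phrase the step as ``$X$ is smooth at each codimension one point, hence a local complete intersection in $A$ there'' rather than ``a regular local ring is a complete intersection,'' since Proposition~\ref{p:06}(3) refers to the embedded notion.
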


\begin{remark} In many applications, one starts with a variety $X$ of codimension $c$ in a nonsingular ambient space $A$ and  constructs a sequence of varieties $$X=X_0\sim X_1 \sim X_2\sim\cdots,$$ in which $X_{i+1}$ is a generic link of $X_{i}$. Two varieties connected by such a sequence are called in the same linkage class. Hence properties that preserved under linkage will be particularly interesting. For singularities, the situation is complicated. For instance, rational singularities, which in some sense are simple, cannot be preserved under linkage. The conditions imposed to preserve rational singularities are fairly strong \cite{Niu:SingLink}. But log canonical singularities of pairs (essentially the same as MJ-log canonical singularities) are preserved \cite{Niu:SingLink}, i.e., if $(A,cX)$ is log canonical, then in the linkage sequence above $(A,cX_i)$ is always log canonical. However, roughly speaking, log canonical singularities are still too broad. The theorem we just proved provides a new type of singularities, MJ-canonical singularities, which are stronger than rational singularities but  can be preserved under linkage. It would be interesting to investigate further linkage classes containing MJ-canonical singularities. 
\end{remark}

\section{Variants and conjectures.}

\noindent Let $X_k$ be a codimension $c$ subvariety in a  nonsingular ambient variety $A_k$. Let $Y$ be a generic link of $X$ via a complete intersection $V$, as in Definition \ref{def:01}. Write $Z=X\cap Y$. It has been showed in \cite{Niu:SingLink} that for log canonical thresholds ($\lct$),
$$\lct(A,X)=\lct(A,V)\leq \lct (A,Y).$$
The way to establish this result is to show first the equality  $\lct(A,X)=\lct(A,V)$ and then the second inequality follows immediately as $I_V\subseteq I_Y$. Turning to another important invariant of singularities, namely minimal log discrepancy, it is expected that one could establish a similar result
$$\mld(Z;A,cX)=\mld(Z;A,cV)\leq \mld(Z;A,cY),$$
from which Theorem \ref{mainthm} would follow immediately by the Inversion of Adjunction. Unfortunately, the situation for $\mld$ is more complicated than $\lct$ and the above ideal equality ``$\mld(Z;A,cX)=\mld(Z;A,cV)$" does not hold, as showed in the following example. 

\begin{example}\label{ex:01}Consider $X_k$ in $k[x, y]$ defined by the equation $x$. We construct a generic
	link of $X_k$ as follows: $X$ is still defined by $x$ in $k[x, y, u]$, the complete intersection
	$V$ is defined by $xu$ and a generic link $Y$ is then defined by $u$. Now $X$ and $Y$ are two
	coordinate planes in $\nA^3$. The intersection $Z = X \cap Y$ is a line defined by the ideal $(x, u)$.
	Then we see
	$$\mld(Z;\nA^3, V ) = 0,\quad \text{and } \mld(Z;\nA^3, X) = 1.$$
Hence the equality $\mld(Z;A,cX)=\mld(Z;A,cV)$ is not true in general.	
\end{example}
 
This example shows that  we cannot directly use the complete intersection $V$ as a bridge to compare the minimal log discrepancies of $X$ and $Y$. However, based on what we have established in the preceding section, we are still able to prove the following theorem which reveals the behavior of minimal log discrepancies under generic linkage.


\begin{theorem}\label{p:42}Let $Y$ be a generic link of a variety $X_k$ in a nonsingular affine space $A_k$. Let $Z=X\cap Y$ and let $c=\codim_A X$. Then one has
	$$\mld(Z;A, cV)\leq \mld(Z;A,cX)\leq \mld(Z;A,cY).$$	
\end{theorem}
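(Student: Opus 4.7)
For the first inequality $\mld(Z;A,cV)\le\mld(Z;A,cX)$, the approach is simply to use the ideal inclusion $I_V\subseteq I_X$. On any log resolution that resolves both $I_V$ and $I_X$, this implies $\ord_E I_V\ge\ord_E I_X$ for each prime divisor $E$ on the resolution, hence $a(E;A,cV)+1\le a(E;A,cX)+1$. Taking the infimum over prime divisors $E$ over $A$ with $C_A(E)\subseteq Z$ yields the first inequality.

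For the second inequality $\mld(Z;A,cX)\le\mld(Z;A,cY)$, my plan is to build on the log resolution $\rho:A'\to A$ of $I_V\cdot I_X$ from the proof of Theorem \ref{thm:21}, which satisfies
$$I_X\cdot\sO_{A'}=\sO_{A'}(-E_X-P),\qquad I_V\cdot\sO_{A'}=\sO_{A'}(-E_X-E_Y-P),$$
and then to refine by further blowups to obtain $f:A^\sharp\to A$ on which $I_Y$ also becomes principal, $I_Y\cdot\sO_{A^\sharp}=\sO_{A^\sharp}(-D_Y)$. The key algebraic input is the linkage identity $I_Y=I_V:I_X$, which gives $I_X\cdot I_Y\subseteq I_V$, and hence $\ord_F I_X+\ord_F I_Y\ge\ord_F I_V$ for every prime divisor $F$ on $A^\sharp$; combined with $\ord_F I_V\ge\ord_F I_Y$ (from $I_V\subseteq I_Y$), this tightly constrains the three orders and thus the three discrepancies. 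The next step is to show that for every prime divisor $F$ with $C_A(F)\subseteq Z$ achieving or approaching $\mld(Z;A,cY)$, one can exhibit a companion prime divisor $F'$ with $C_A(F')\subseteq Z$ satisfying $a(F';A,cX)+1\le a(F;A,cY)+1$.

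The construction of $F'$ would exploit the symmetric local structure of $X$ and $Y$ near $Z$: at a general point of $Z$, both $X$ and $Y$ are smooth codimension $c$ subvarieties of $A$ meeting along $Z$, so $F'$ can be obtained from $F$ by interchanging the normal-bundle directions of $X$ and $Y$. The main obstacle I expect is globalizing this symmetry. To handle it, I would build a second log resolution $\rho':A''\to A$ of $I_V\cdot I_Y$ by swapping the roles of $X$ and $Y$ in the construction of $\rho$ (that is, start from a factorizing resolution of $Y$, blow up its strict transform, then blow up the strict transform of $X$), take a common refinement with $A^\sharp$, and then compare the discrepancies of divisors with center in $Z$ across the two resolutions. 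The delicate combinatorics of this comparison on the common refinement, together with the verification that no ``asymmetric'' divisor $F$ undercuts the bound, is what I expect to be the hardest part of the argument.
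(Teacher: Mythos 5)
Your treatment of the first inequality is exactly the paper's: it is immediate from $I_V\subseteq I_X$. For the second inequality, however, your proposal is an outline whose essential step is missing, and the mechanism you offer for that step does not work. The order relations you extract from $I_XI_Y\subseteq I_V\subseteq I_Y$ only yield $\ord_F I_X\ge \ord_F I_V-\ord_F I_Y\ge 0$; they do not relate $a(F;A,cX)$ to $a(F;A,cY)$ for a fixed divisor $F$, and no divisorwise comparison can, since $I_X\not\subseteq I_Y$. You therefore fall back on producing a ``companion'' divisor $F'$ with $a(F';A,cX)+1\le a(F;A,cY)+1$, which is precisely the content of the inequality to be proved and is left unconstructed. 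Worse, the symmetry you want to exploit --- interchanging the normal directions of $X$ and $Y$ along $Z$ --- is available only near the generic point of $Z$, where both $X$ and $Y$ are smooth. But $\mld(Z;A,cY)$ is an infimum over divisors whose centers may be arbitrarily small closed subsets of $Z$, and at special points of $Z$ there is no symmetry between a variety and its generic link; swapping the roles of $X$ and $Y$ in the two resolutions and passing to a common refinement gives no control over such divisors.

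The paper's proof avoids all of this by a discreteness argument that your plan misses. All three quantities are nonnegative integers or $-\infty$, and moreover $\mld(Z;A,cX)$ and $\mld(Z;A,cY)$ can never exceed $1$ once they are $\ge 1$, because $Z$ has codimension one in $X$ and in $Y$ (the exceptional divisor of the blowup of $A$ along $Z$, computed at a general point of $Z$, has log discrepancy $(c+1)-c=1$). Hence only the values $-\infty$, $0$, $1$ occur, and one argues by cases: if $\mld(Z;A,cX)\ge 0$ then $(A,cX)$ is log canonical near $Z$, hence so are $(A,cV)$ and $(A,cY)$ by the resolution built in the proof of Theorem \ref{thm:21} (or by \cite{Niu:SingLink}), so $\mld(Z;A,cY)\ge 0$; if $\mld(Z;A,cX)\ge 1$ then, after shrinking $A$, $X$ is the unique log canonical center of $(A,cX)$, the argument of Theorem \ref{thm:21} shows $Y$ is the unique log canonical center of $(A,cY)$, and by the Inversion of Adjunction both $X$ and $Y$ are MJ-canonical near $Z$, forcing $\mld(Z;A,cX)=\mld(Z;A,cY)=1$. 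The missing ingredient in your approach is exactly this reduction to finitely many possible values combined with the log-canonical-center analysis; the resolution bookkeeping you propose does not substitute for it.
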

\begin{proof} Notice that the minimal log discrepancies involved in the theorem are either nonnegative integers or $-\infty$. The first inequality $\mld(Z;A, cV)\leq \mld(Z;A,cX)$ is obvious because $I_V\subseteq I_X$. In the sequel, we shall prove the second inequality
	\begin{equation}\label{eq:06}
	\mld(Z;A,cX)\leq \mld(Z;A,cY).
	\end{equation}

If $\mld(Z;A,cX)=-\infty$, then the inequality (\ref{eq:06}) is obvious. So we may assume that $\mld(Z;A,cX)\geq 0$.  Thus the pair $(A,cX)$ is log canonical in a neighborhood $U\subset A$  of $Z$. Using the construction in the proof of Theorem \ref{thm:21}, we consider the restriction of the resolutions of singularities $\varphi$ and $\psi$ over the open set $U$. Following the same argument, we can show that the pair $(A,cV)$ is log canonical on $U$ and therefore $(A,cY)$ is log canonical on $U$  (see also \cite{Niu:SingLink} for a proof in this case). Thus $\mld(Z;A,cY)\geq 0$. This means particularly that if $\mld(Z;A,cX)=0$, then the inequality  (\ref{eq:06}) holds.
	
	Next we assume that $\mld(Z;A,cX)\geq 1$. Thus the pair $(A,cX)$ has no log canonical centers contained in $Z$. By removing the log canonical centers that are properly contained in $X$, we may assume that on the open set $U$, the pair $(A,cX)$ is log canonical and has a unique log canonical center $X$. Following the proof of Theorem \ref{thm:21} again by restricting $\varphi$ and $\psi$ over the open set $U$, we conclude that on the open set $U$, $(A,cY)$ has a unique log canonical center $Y$ and the intersection $Z$ is normal and of codimension one in $X$. By the Inversion of Adjunction, we deduce that on the open set $U$, both $X$ and $Y$ are MJ-canonical and therefore are normal and nonsingular at the generic point of $Z$. Consequently, we obtain $\mld(Z,A,cX)=\mld(Z,A,cY)=1$. Hence the inequality (\ref{eq:06}) still holds.
	
\end{proof}

Using the Inversion of Adjunction, we immediately have the following corollary concerning minimal MJ-log discrepancies under linkage.
\begin{corollary} Let $Y$ be a generic link of a variety $X_k$ in a nonsingular affine space $A_k$ and let $Z=X\cap Y$. Then one has
		$$\MJ{\mld}(Z;X,\sO_X)\leq \MJ{\mld}(Z;Y,\sO_Y).$$	
\end{corollary}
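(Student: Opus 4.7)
The plan is to deduce the corollary as a direct reduction from Theorem \ref{p:42} via the Inversion of Adjunction (Theorem \ref{p:03}(1)). The strategy is to rewrite each side of the inequality $\MJ{\mld}(Z;X,\sO_X)\leq\MJ{\mld}(Z;Y,\sO_Y)$ in terms of a minimal log discrepancy of a pair in the nonsingular ambient space $A$, and then appeal to the already-proved inequality $\mld(Z;A,cX)\leq\mld(Z;A,cY)$.

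First I would verify the hypotheses needed to apply Theorem \ref{p:03}(1). Since $Y$ is a generic link of $X_k$ via a complete intersection $V$ of codimension $c = \codim_A X$, the link $Y$ is also a codimension $c$ subvariety of $A$; this is a standard feature of linkage as set up in Definition \ref{def:01}. Next, because $X$ and $Y$ are geometrically linked by $V$, the intersection $Z = X\cap Y$ is purely of codimension one in each of $X$ and $Y$, and in particular is a proper closed subset of each. This puts us squarely in the setting of Theorem \ref{p:03}(1) with $W=Z$ applied to both $X\subset A$ and $Y\subset A$, yielding
\[
\MJ{\mld}(Z;X,\sO_X)=\mld(Z;A,I_X^c),\qquad \MJ{\mld}(Z;Y,\sO_Y)=\mld(Z;A,I_Y^c).
\]

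With these two identifications in hand, the asserted inequality is exactly the second inequality in Theorem \ref{p:42}, and no further work is required. The main (and really only) point that needs attention is bookkeeping: matching the exponent $c$ on both sides, checking that $Z$ is a proper closed subset of both varieties, and noting that the corollary is stated in the algebraic/generic setting of Section 3 so Theorem \ref{p:42} is directly applicable. There is no genuine obstacle here; the substantive content has already been packaged into Theorems \ref{thm:21} and \ref{p:42}, and the corollary is simply their MJ-theoretic reformulation via Inversion of Adjunction.
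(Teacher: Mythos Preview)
Your proposal is correct and follows exactly the approach the paper intends: the corollary is stated immediately after Theorem~\ref{p:42} with the one-line justification ``Using the Inversion of Adjunction,'' and your argument spells out precisely that step via Theorem~\ref{p:03}(1). The hypothesis checks you include (that $Y$ has codimension $c$ and that $Z$ is a proper closed subset of both $X$ and $Y$) are the right ones, and nothing more is needed.
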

The intersection divisor $Z$ contains the singular locus of $Y$ and therefore governs the singularities of $Y$. Besides, it also plays central role when we  try to use the induction method for a generic link. For instance, in \cite{CU:Reg}, Chardin-Ulrich studied the singularities of $Z$ and then proceeded by induction on dimensions to obtain a bound for Castelnuovo-Mumford regularity. Motivated by Theorem \ref{thm:21} (2), we propose the following conjecture.

\begin{conjecture}Let $Y$ be a generic link of a variety $X$ in a nonsingular affine space $A$. Assume that $X$ is MJ-canonical. Then the intersection $Z=X\cap Y$ is also MJ-canonical. 
\end{conjecture}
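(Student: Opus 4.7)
The plan is to reduce the conjecture, via the Inversion of Adjunction (Theorem \ref{p:03}), to a discrepancy inequality on $A$. Since $Z$ has codimension $c+1$ in $A$, the variety $Z$ is MJ-canonical if and only if $\ord_E(K_{A'/A})\geq (c+1)\,\ord_E(I_Z)$ for every prime divisor $E$ over $A$ with $C_A(E)\subsetneq Z$. Corollary \ref{p:23}, applied to the codimension-one subvariety $Z\subset X$ (normal and irreducible by Theorem \ref{thm:21}(2) and Proposition \ref{p:06}), already gives that $(A,cZ)$ is canonical and $I_Z=\sI(A,(c+1)Z)$, hence $\ord_E(K_{A'/A})\geq c\,\ord_E(I_Z)$ for every exceptional $E$. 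The task is therefore to gain one additional order of $\ord_E(I_Z)$.

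I would work on the log resolution $\rho\colon A'\to A$ of $I_V\cdot I_X$ constructed in the proof of Theorem \ref{thm:21}, further refined so as to log-resolve $I_Y$ and $I_Z$ as well. On this model, $I_Z=I_X+I_Y$ forces $\ord_E(I_Z)=\min\bigl(\ord_E(I_X),\ord_E(I_Y)\bigr)$, and Proposition \ref{p:02} applied to the MJ-canonical varieties $X$ and $Y$ (the latter by Theorem \ref{thm:21}(1)) yields
\[
	\ord_E(K_{A'/A})\geq c\,\ord_E(I_X)\qquad\text{and}\qquad \ord_E(K_{A'/A})\geq c\,\ord_E(I_Y)
\]
for every $E$ with $C_A(E)\subsetneq X$, respectively $\subsetneq Y$. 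The extra input I plan to exploit is the minimal-LC-center structure of $Z$ in $(A,cV)$ together with the klt pair $(Z,D_Z)$ of Theorem \ref{thm:21}(2). The perturbed pair $(A,(c-\epsilon)V+\epsilon aZ)$ used in that proof has $Z$ as a minimal log canonical center and no log canonical center strictly contained in $Z$, so that for every $E$ with $C_A(E)\subsetneq Z$ one has the strict discrepancy inequality $\ord_E(K_{A'/A})>(c-\epsilon)\,\ord_E(I_V)+\epsilon a\,\ord_E(I_Z)-1$. Combining this with the structural identity $I_V=(U_{ij})\cdot(f_1,\ldots,f_t)^T$ of the generic link---which constrains $\ord_E(I_V)$ in terms of $\ord_E(I_X)$ on valuations pulled back from $A_k$---and then letting $\epsilon\to 0^+$ should upgrade the canonical bound $c\,\ord_E(I_Z)$ to the desired $(c+1)\,\ord_E(I_Z)$.

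The main obstacle is precisely this upgrade. For divisors $E$ centered deep inside the singular locus of $Z$, the naive relations among $\ord_E(I_X)$, $\ord_E(I_Y)$, and $\ord_E(I_V)$ do not by themselves force a gain of one order in $\ord_E(I_Z)$, and the extra positivity has to come from the klt structure of $(Z,D_Z)$ rather than from the formal combinatorics of the three ideals. Translating this pair-theoretic klt information into the intrinsic Mather-Jacobian condition on $Z$---perhaps via a strengthening of the Local Subadjunction Formula \cite[Theorem 7.2]{Fujino:Subadjunction}, or through a Nash-blow-up analysis comparing $\Jac_Z$ with $\Jac_X$ and $\Jac_Y$---is where the genuine difficulty of the conjecture lies.
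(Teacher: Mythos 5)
This statement is posed as an \emph{open conjecture} in the paper; no proof of it is given there, and the remark immediately following it records only the weaker facts that $(A,cZ)$ is canonical and $I_Z=\sI(A,(c+1)Z)$ (Corollary \ref{p:23}), together with the existence of a boundary $D_Z$ making $(Z,D_Z)$ klt (Corollary \ref{p:41}). Your reduction via the Inversion of Adjunction is correct: $Z$ is MJ-canonical precisely when $\ord_E(K_{A'/A})\geq (c+1)\ord_E(I_Z)$ for every prime divisor $E$ over $A$ whose center is a proper closed subset of $Z$, and the inputs you assemble --- the bound $\ord_E(K_{A'/A})\geq c\max\bigl(\ord_E(I_X),\ord_E(I_Y)\bigr)$ coming from Proposition \ref{p:02} applied to $X$ and to $Y$ (the latter MJ-canonical by Theorem \ref{thm:21}), and the identity $\ord_E(I_Z)=\min\bigl(\ord_E(I_X),\ord_E(I_Y)\bigr)$ on a common log resolution --- are all legitimately available. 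But, as you yourself observe, these only reproduce the coefficient $c$, not $c+1$, so what you have written is a plan of attack rather than a proof.

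The two mechanisms you propose for the upgrade do not close the gap. Letting $\epsilon\to 0^+$ in the strict inequality $a(F;A,(c-\epsilon)V+\epsilon aZ)>-1$ merely recovers $a(F;A,cV)\geq -1$, i.e.\ log canonicity of $(A,cV)$, which is already known and carries no information about the coefficient of $\ord_F(I_Z)$; the constants $a$ and $\epsilon$ in the proof of Theorem \ref{thm:21} were chosen \emph{after} fixing a resolution precisely so that the perturbation changes nothing for the divisors of interest. Likewise the identity $I_V=(U_{ij})\cdot(f_1,\dots,f_t)^T$ only yields $\ord_E(I_V)\geq\ord_E(I_X)$ and says nothing in the direction of $\ord_E(I_Z)$. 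In the critical case $\ord_E(I_X)=\ord_E(I_Y)=\ord_E(I_Z)$ (divisors obtained by blowing up centers inside $Z$ along which $X$ and $Y$ are equisingular) none of your inequalities improve on $c\,\ord_E(I_Z)$. The genuine content of the conjecture --- converting the extrinsic klt pair $(Z,D_Z)$ produced by subadjunction into the intrinsic Mather--Jacobian condition, which requires controlling $\Jac_Z$ and $\widehat{K}$ on a resolution of $Z$ rather than a boundary divisor --- is exactly the step you leave open, and it is the step the paper itself does not know how to carry out.
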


\begin{remark} Under the assumption of the conjecture, it seems reasonable to show first a weaker result that the pair $(A, (c+1)Z)$ is log canonical, where $c=\codim_AX$. Note that  by Corollary \ref{p:23}, the pair $(A,cZ)$ is already canonical and $I_Z=\sI(A,(c+1)Z)$, which is very close to the pair $(A, (c+1)Z)$ being log canonical.
	
\end{remark}

Finally, we discuss variant settings of generic linkage in application. The results we have obtained can be easily established for these settings, so we leave the details to the reader. 

\begin{definition}\label{def:22}	Let $A=\Spec R$ be an affine nonsingular variety and let $X\subset A$ be a subvariety of codimension $c$. Fix a set of generators for the ideal $I_X$ as $f_1,\cdots, f_t$. Define a complete intersection $V$  by the equations $$\alpha_i=a_{i,1}f_1+\cdots+a_{i,t}f_t,$$ for $i=1,\cdots, c$, where $a_{i,j}$'s are general scalars in $k$. Then a {\em general link} $Y$ (or generic link, to be consistent with Definition \ref{def:01}) is defined by the ideal $I_Y=(I_V:I_X)$. 
\end{definition}

\begin{definition}\label{def:21}Let $X$ be a codimension $c$ subvariety in a  nonsingular  variety $A$. Let $L$ be a line bundle on $A$ such that $X$ is cut out by $t\ (\geq c)$ sections $$f_1,\cdots, f_t \in H^0(A,L).$$
	Choose general $c$ sections from the linear space $\langle f_1,\cdots, f_t\rangle\subseteq H^0(A,L)$ such that they cut out a complete intersection $V$ of $A$. A {\em general link} (or generic link, to be consistent with Definition \ref{def:01}) $Y$ of $X$ is defined by the ideal $I_Y=(I_V:I_X)$.	
\end{definition}

\begin{remark}\label{rmk:31} (1). Strictly speaking, the meaning of ``choose general scalars or general sections" in the above definitions should depend on certain properties that we want to show for a general link. When we prove results for these settings of general links, those properties (such as singularities) should be clear from the context.
	
	(2). Unlike in the algebraic setting of Definition \ref{def:01}, the generic link $Y$ in Definition \ref{def:22} and \ref{def:21} could be empty. This happens if and only if the variety $X$ itself is already a complete intersection in $A$. 

	(3). If $X$ is a projective variety in a projective space $A=\nP^N$ defined by an ideal sheaf $\sI_X$, then we can take a number $d$ such that $\sI_X(d)$ is globally generated. In this case, take $L=\sO_{\nP^N}(d)$ and the complete intersection $V$ is also cut out by degree $d$ equations.
	
	(4). If we cover $X$ by affine open sets, we can easily reduce the case of Definition \ref{def:21} to the case of Definition \ref{def:22}. A general link in Definition \ref{def:22} is just a general fiber over $\Spec k[U_{i,j}]$ of the generic link $Y$ in Definition \ref{def:01}. Hence our main result can be established for all of those settings, which we state as the following corollaries. 
\end{remark}

\begin{corollary}Let $X$ be a subvariety of a nonsingular variety $A$ and let $Y$ be a generic link of $X$ (in the sense of Definition \ref{def:22} and \ref{def:21}). 
	
	\begin{itemize}
		\item [(1)] If $X$ is MJ-canonical (resp. MJ-log canonical), then so is $Y$.
		\item [(2)] If $X$ is MJ-canonical, then the irreducible components of the intersection $Z=X\cap Y$ are disjoint and of codimension one in $X$. Furthermore, for such a component $W$ of $Z$, there exists an effective $\nQ$-divisor $D_W$ on $W$ such that the pair $(W,D_W)$ is klt.
	\end{itemize}
\end{corollary}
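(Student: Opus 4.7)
The plan is to reduce both variant constructions to the algebraic generic linkage of Definition \ref{def:01} and then transfer the conclusions of Theorem \ref{thm:21} to a general closed fiber of the parameter space.

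First I would reduce Definition \ref{def:21} to Definition \ref{def:22}. Cover $A$ by affine open sets $\{U_\lambda\}$ on which the line bundle $L$ trivializes; on each $U_\lambda$ the sections $f_1,\dots,f_t$ restrict to regular functions generating $I_X|_{U_\lambda}$, and a general element of $\langle f_1,\dots,f_t\rangle$ restricts to a general $k$-linear combination of $f_i|_{U_\lambda}$. Because the formation of the colon ideal $(I_V:I_X)$ commutes with localization, the general link of Definition \ref{def:21} restricts to a general link in the sense of Definition \ref{def:22} on each $U_\lambda$. Since MJ-canonicity, MJ-log canonicity, and the klt property of $(W,D_W)$ are all local conditions, it suffices to treat Definition \ref{def:22}.

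Next I would realize the general link of Definition \ref{def:22} as a closed fiber of the generic link of Definition \ref{def:01} over the parameter space $\nA^{ct}=\Spec k[U_{ij}]$, as indicated in Remark \ref{rmk:31}(4). Let $\pi:A\to \nA^{ct}$ be the projection, let $\mathcal{Y}$ be the generic link, and let $\mathcal{Z}=\mathcal{X}\cap\mathcal{Y}$. The fiber $\mathcal{Y}_a$ over a closed point $a=(a_{ij})$ is exactly the general link $Y$ of Definition \ref{def:22} associated to the scalars $a_{ij}$. By Theorem \ref{thm:21}, $\mathcal{Y}$ is MJ-canonical (resp.\ MJ-log canonical) whenever $X_k$ is, and the statements of part (2) hold for the components of $\mathcal{Z}$.

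The main step, and the expected main obstacle, is a semicontinuity argument that propagates these properties from the total space $\mathcal{Y}$ to a general closed fiber of $\pi$. To carry it out, I would fix a log resolution $\Phi:\mathcal{A}'\to A$ that simultaneously resolves $\Jac_{\mathcal{Y}}$ and the auxiliary divisors appearing in the Subadjunction data used to produce each $D_W$ in the proof of Theorem \ref{thm:21}. Applying generic smoothness to the composition $\pi\circ\Phi$, I would locate a dense open $U\subset\nA^{ct}$ over which $\Phi$ restricts fiberwise to a log resolution of the corresponding data on $\mathcal{Y}_a$ for every $a\in U(k)$. On such fibers the Mather and Jacobian discrepancy divisors, and hence all MJ-discrepancies, are computed by the same numerical data as on the total space, so MJ-canonicity and MJ-log canonicity specialize from $\mathcal{Y}$ to $Y=\mathcal{Y}_a$; by the same token the disjointness of the components of $\mathcal{Z}$, their codimension-one property in $\mathcal{X}$, and the klt property of each pair $(W,D_W)$ restrict to the corresponding data on $Z=\mathcal{Z}_a$. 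Choosing a general $a\in U(k)$ then gives both (1) and (2). The delicate point is arranging that the resolution, the Jacobian ideal, and the Subadjunction divisors $D_W$ all restrict compatibly to the general fiber simultaneously; once that is set up the MJ-discrepancy inequalities transfer directly.
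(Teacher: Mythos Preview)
Your proposal is correct and follows precisely the approach the paper indicates in Remark~\ref{rmk:31}(4): reduce Definition~\ref{def:21} to Definition~\ref{def:22} by an affine cover, view the general link of Definition~\ref{def:22} as a general closed fiber of the algebraic generic link of Definition~\ref{def:01} over $\Spec k[U_{ij}]$, and then specialize the conclusions of Theorems~\ref{thm:31} and~\ref{thm:21} via generic smoothness on a fixed log resolution. The paper itself gives no more detail than you do on the specialization step.

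It is worth noting that the paper also records, in the remark immediately following Corollary~\ref{p:43}, an alternative route that bypasses the fiber argument entirely: one repeats the construction in the proof of Theorem~\ref{thm:21} directly over $A$, taking a factorizing resolution $\varphi:\overline{A}\to A$ of $X$, blowing up the strict transform $\overline{X}$, and then invoking Bertini's theorem (since $I_V$ is generated by $c$ general elements of $I_X$) to obtain $I_V\cdot\sO_{\widetilde{A}}=I_{\widetilde{Y}}\cdot\sO_{\widetilde{A}}(-T-\mu^*G)$ with $\widetilde{Y}$ nonsingular and transverse to the exceptional strata. This reproduces Claim~\ref{clm:01} without any parameter space, after which the rest of the proof of Theorem~\ref{thm:21} runs verbatim. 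The Bertini approach has the advantage that you never have to worry about the ``delicate point'' you flag---compatibility of the resolution, the Jacobian ideal, and the Subadjunction data with restriction to a general fiber---since everything is built directly on $A$; your approach has the advantage of being a clean formal reduction to the already-proved theorem.
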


\begin{corollary}\label{p:43} Let $X$ be a codimension $c$ subvariety of a nonsingular variety $A$ and let $Y$ be a generic link of $X$ (in the sense of Definition \ref{def:22} and \ref{def:21}). Let $Z=X\cap Y$ be the intersection. Then 
	$$\mld(Z;A, cV)\leq \mld(Z;A,cX)\leq \mld(Z;A,cY).$$
\end{corollary}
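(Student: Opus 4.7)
The plan is to reduce the corollary to Theorem \ref{p:42}, which already establishes both inequalities in the algebraic setting of Definition \ref{def:01}. The bridge is Remark \ref{rmk:31}(4): the general link of Definition \ref{def:22} is recovered from the generic link of Definition \ref{def:01} by specializing the parameter matrix $(U_{i,j})$ to a general closed point, and the line-bundle setting of Definition \ref{def:21} reduces to Definition \ref{def:22} after passage to an affine cover trivializing $L$. Since minimal log discrepancies along $Z$ are computed locally around $Z$, this localization costs nothing.

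First I would treat Definition \ref{def:21}. Fix an affine open cover $\{A^{(\alpha)}\}$ of $A$ on which $L$ is trivial, so that the sections $f_1,\ldots,f_t\in H^0(A,L)$ restrict to regular functions on each $A^{(\alpha)}$; the sections chosen to cut out $V$ then restrict to generic $k$-linear combinations of $f_1,\ldots,f_t$, so that on each patch the set-up coincides exactly with a general link in the sense of Definition \ref{def:22}. Since $\mld(Z;A,cX)$, $\mld(Z;A,cY)$, and $\mld(Z;A,cV)$ are all computed from divisorial valuations whose centers lie inside $Z$, and $Z$ is covered by the patches $A^{(\alpha)}\cap Z$, it suffices to establish the corollary in the setting of Definition \ref{def:22}.

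Next, for Definition \ref{def:22}, let $\mathcal{Y} \subset \mathcal{A}=A_k\times_k \Spec k[U_{i,j}]$ denote the generic link of Definition \ref{def:01}, with complete intersection $\mathcal{V}$ and intersection $\mathcal{Z}=\mathcal{X}\cap \mathcal{Y}$, and let $\pi:\mathcal{A}\to \Spec k[U_{i,j}]$ be the natural projection. A general link in the sense of Definition \ref{def:22} is exactly the fiber $Y=\mathcal{Y}_u$ over a sufficiently general closed point $u\in \Spec k[U_{i,j}]$, and similarly $V=\mathcal{V}_u$, $X=\mathcal{X}_u$, $Z=\mathcal{Z}_u$. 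Thus it suffices to show that for general $u$ one has
$$\mld(\mathcal{Z}_u;\mathcal{A}_u, c \mathcal{V}_u) = \mld(\mathcal{Z};\mathcal{A}, c\mathcal{V}), \quad \mld(\mathcal{Z}_u;\mathcal{A}_u,c\mathcal{X}_u) = \mld(\mathcal{Z};\mathcal{A},c\mathcal{X}),$$
and the same with $\mathcal{Y}$, and then to apply Theorem \ref{p:42} to the chain on the total space.

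The only substantive point, and the main obstacle, is this preservation of $\mld$ under passage to a sufficiently general fiber of $\pi$. My plan is to fix a single log resolution $f:\mathcal{A}'\to\mathcal{A}$ of the product ideal $I_\mathcal{X}\cdot I_\mathcal{V}\cdot I_\mathcal{Y}\cdot I_\mathcal{Z}$ and to invoke generic smoothness of $\pi\circ f$ over the base: for $u$ in a dense open subset of $\Spec k[U_{i,j}]$, the map $f$ restricts to a log resolution of the corresponding product ideal on $\mathcal{A}_u$, the relative canonical divisor restricts correctly because the fiber is a smooth transverse subvariety of $\mathcal{A}'$, and every divisor computing $\mld$ over $\mathcal{Z}_u$ can be lifted to an exceptional divisor of $f$ with the same discrepancy. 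This gives the claimed equality of minimal log discrepancies for general $u$, and the corollary follows by combining it with Theorem \ref{p:42}.
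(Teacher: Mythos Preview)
Your proposal is correct and follows exactly the route the paper indicates in Remark~\ref{rmk:31}(4): reduce Definition~\ref{def:21} to Definition~\ref{def:22} by passing to an affine cover trivializing $L$, then view the general link of Definition~\ref{def:22} as a general fiber of the family in Definition~\ref{def:01} and invoke Theorem~\ref{p:42}; the generic-smoothness argument you give for transferring $\mld$ to the fiber is the standard way to justify this step, which the paper leaves implicit. For completeness, note that in the remark immediately following the corollaries the paper also sketches an alternative direct proof that bypasses specialization entirely, rebuilding the resolution of Claim~\ref{clm:01} via Bertini's theorem on $A$ itself.
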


\begin{remark} Alternatively, one may prove above corollaries by constructing appropriate resolutions of singularities for $X$ and $V$, paralleling to the proof of Theorem \ref{thm:21}. We outline this approach here for the convenience of the reader. Take a factorizing resolution of singularities of $X$ inside $A$ as $\varphi:\overline{A}\longrightarrow A$ such that $I_X\cdot \sO_{\overline{A}}=I_{\overline{X}}\cdot\sO_{\overline{A}}(-G)$ where the nonsingular variety $\overline{X}$ is the strict transform of $X$, $G$ is an effective divisor supported on  $\exc(\varphi)$, and $\overline{X}$ and $\exc(\varphi)$ are simple normal crossings. Blow up $\overline{A}$ along $\overline{X}$ to get $\mu:\widetilde{A}=\Bl_{\overline{X}}\overline{A}\longrightarrow \overline{A}$  with an exceptional divisor $T$. Then the composition $\psi=\varphi\circ\mu:\widetilde{A}\longrightarrow A$	is a log resolution of $I_X$ satisfying the condition that $I_{X}\cdot\widetilde{A}=\sO_{\widetilde{A}}(-T-\mu^*G)$. Since the ideal $I_V$ is generated by $c$ general equations in $I_X$, using Bertini's theorem we get that $I_{V}\cdot\sO_{\widetilde{A}}=I_{\widetilde{Y}}\cdot \sO_{\widetilde{A}}(-T-\mu^*G)$ and $\widetilde{Y}$ is a nonsingular variety on $\widetilde{A}$ resolving the singularities of $Y$. Now this construction satisifies all the properties in Claim \ref{clm:01}. Follow the same argument in the proof of Theorem \ref{thm:21}, the corollaries above can be easily proved.

\end{remark}

\begin{remark} Since we do not have a similar Proposition \ref{p:06} of Johnson-Ulrich for the cases of Definition \ref{def:22} and \ref{def:21}, it is not clear to us that if $Z$ is irreducible in the above corollaries. In addition, the general link $Y$ might not be irreducible either.  	
\end{remark}

\bibliographystyle{alpha}

\end{document}